\documentclass[12pt, article]{amsart}
\usepackage{amsmath, amsthm, amscd, amsfonts, amssymb, graphicx, color}
\usepackage{mathrsfs}

\newtheorem{theorem}{Theorem}[section]
\newtheorem{lemma}[theorem]{Lemma}
\newtheorem{proposition}[theorem]{Proposition}
\newtheorem{corollary}[theorem]{Corollary}

\theoremstyle{definition}

\newtheorem{example}[theorem]{Example}

\theoremstyle{remark}
\newtheorem{remark}[theorem]{Remark}

\usepackage[all]{xy}
\numberwithin{equation}{section}

\begin{document}
\title[Numerical radius and norm bounds ]{Inner product bounds via the Moore-Penrose inverse with applications}
\author[A.
Sheikhhosseini ]{Alemeh
Sheikhhosseini}
\address{Department of Pure Mathematics, Faculty of Mathematics and Computer, Shahid Bahonar University of Kerman, Kerman, Iran} \email{sheikhhosseini@uk.ac.ir}

 \author[Gh. Aghamollaei]{Gholamreza Aghamollaei}
\address{Department of Pure Mathematics, Faculty of Mathematics and Computer, Shahid Bahonar University of Kerman, Kerman, Iran \newline
Mahani Math Center, Afzalipour Research Institute, Shahid Bahonar University of Kerman, Kerman, Iran \newline Corresponding author.}
\email{aghamollaei@uk.ac.ir}

 \author[M. Sababheh]{Mohammad Sababheh}
\address{Department of Basic Sciences, Princess Sumaya University for Technology, Amman 11941, Jordan} \email{sababheh@psut.edu.jo, sababheh@yahoo.com}

\subjclass[2010]{15A39, 15B48, 47A30, 47A63.}

 \keywords{Numerical radius, Moore-Penrose inverse, operator inequality}
\maketitle
\begin{abstract}
In this paper, by implementing the generalized inverse, known as the Moore-Penrose inverse, for Hilbert space operators, several inner product bounds that are of Cauchy-Schwarz type are presented. As a standard application, some new refined and generalized versions of numerical radius-norm bounds are also stated, in a generalized form that involves different combinations of operators.
\end{abstract}

 \section{Introduction}

Let $ \mathcal{B}(\mathcal{H}) $ denote the $ \mathcal{C^{*}}- $algebra of all bounded linear operators from a complex Hilbert space $ \mathcal{H} $ to itself, with identity $I$. The numerical range of an operator $ T \in \mathcal{B}(\mathcal{H}) $ is defined to be the complex set
\[W(T)=\{ \langle Tx,x\rangle ~:~x\in \mathcal{H},~\|x\|=1\},\]
where $ \langle\cdot, \cdot\rangle $ is the inner product defined on $ \mathcal{H},$ and $ \Vert\cdot\Vert $ is the induced norm on $\mathcal{H}. $ It is well known that $ W(T) $ is a bounded convex set, but the exact shape of it is uncertain; see \cite{gust}.

 Related to the numerical range, recall that the numerical radius $\omega(T)$ of $T\in\mathcal{B}(\mathcal{H})$ is defined by
\[\omega(T)=\sup\{ |\langle Tx,x\rangle |~:~x\in \mathcal{H},~\|x\|=1\}.\]
It is well known that $\omega (\cdot)$ defines a norm on $\mathcal{B}(\mathcal{H})$, which is equivalent to the usual operator norm $\|\cdot\|$, defined by $\|T\|=\sup_{\|x\|=1}\|Tx\|$. In fact, for any $ T \in \mathcal{B}(\mathcal{H})$, one has \cite[Theorem 1.3-1]{gust}
\begin{equation}\label{eq1}
\dfrac{1}{2} \|T\| \leq \omega(T) \leq \|T\|.
\end{equation}
The first inequality in \eqref{eq1} becomes an equality if $T^2=O,$ where $O$ is the zero operator in $\mathcal{B}(\mathcal{H})$, while the second inequality becomes an equality if $T$ is normal in the sense that $TT^*=T^*T,$ where $T^*$ is the adjoint of $T$.

 It has been of adequate interest to find further relations between the numerical radius and operator norm. To be brief, we refer the reader to the list \cite{bhu1, S.S, hadkitt, kit_frob, Kittaneh 2, kitt_mosl, sab_num_laa, sab_num_lama, Sababheh, sahoo, satt_mosl_shebr, Sattari, sheyb, Z-M, zam_2021} as a reading list on this topic. Of recent interest, the role of the Moore-Penrose inverse in studying the numerical radius has become apparent.

 If $T\in\mathcal{B}(\mathcal{H})$, its Moore-Penrose inverse as one of generalized inverses is denoted by $T^{\dagger}.$ Generally, $T^{\dagger}$ is a densely defined unbounded operator. Yet, if $T$ has a closed range, then $T^{\dagger}\in\mathcal{B}(\mathcal{H})$. We will use the notation $\mathcal C\mathcal R (\mathcal{H})$ to denote the subset of $\mathcal{B}(\mathcal{H})$ of operators with closed ranges. If $T\in \mathcal C\mathcal R(\mathcal{H})$, the Moore-Penrose inverse $T^{\dagger}$ is the unique operator $G$ in $\mathcal{B}(\mathcal{H})$ that satisfies the following four equations, simultaneously:
\begin{equation*}\label{eq_properties_1}
TGT=T, GTG=G, {{\left( TG \right)}^{*}}=TG \;{\text{and}}\; {{\left( GT \right)}^{*}}=GT.
\end{equation*}
It is clear that when $\mathcal{H}$ is finite dimensional, all operators in $\mathcal{B}(\mathcal{H})$ have closed ranges. Thus, all matrices have the Moore-Penrose inverse. It is easy to see that, for $T\in \mathcal C\mathcal R(\mathcal{H}),$
\begin{equation}\label{7}
\begin{aligned}
T&=T{{T}^{\dagger }}T \\
& ={{\left( T{{T}^{\dagger }} \right)}^{*}}T \quad \text{(since $T{{T}^{\dagger }}$ is self-adjoint)}\\
& ={{\left( {{T}^{\dagger }} \right)}^{*}}{{\left| T \right|}^{2}}
\end{aligned}
\end{equation}
and
\begin{equation}\label{8}
\begin{aligned}
T&=T{{T}^{\dagger }}T \\
& =T{{\left( {{T}^{\dagger }}T \right)}^{*}} \quad \text{(since ${{T}^{\dagger }}T$ is self-adjoint)}\\
& ={{\left| T^{*} \right|}^{2}}{{\left( {{T}^{\dagger }} \right)}^{*}}.
\end{aligned}
\end{equation}

The reader is referred to \cite{Corach_NFAO_2005,Fongi_LAA_2023} for further reading on the Moore-Penrose inverse. Furthermore, the reader is referred to \cite{bhu1,Sababheh_CAOT_2024} for some discussion of numerical radii bounds via the Moore-Penrose inverse.

Although the following scalar inequalities are simple, we state them for completeness. For positive real numbers $ a, b, $ the classical Young's inequality says that if $ p\geq q > 1 $ are such that
$ \frac{1}{p} + \frac{1}{q}=1,$ then
$$ ab \leq \frac{a^{p}}{p} + \frac{b^{q}}{q}.$$
A refinement of this inequality was shown in \cite{kittanehmanasreh} as follows:
\begin{equation}\label{y1}
ab + \frac{1}{p}( a^{p/2}-b^{q/2})^{2} \leq \frac{a^{p}}{p} + \frac{b^{q}}{q},
\end{equation}
which has been generalized in \cite{man} by the form
\begin{equation}\label{y2}
( ab )^{r}+ \frac{1}{p}( a^{rp/2}-b^{rq/2})^{2} \leq \left( \frac{a^{p}}{p} + \frac{b^{q}}{q}\right)^{r}.
\end{equation}
where $ r=1, 2, \ldots. $

Next, we state some lemmas that we will need to accomplish our proofs. The first lemma is known as the McCarthy inequalities. In what follows, an operator $T\in\mathcal{B}(\mathcal{H})$ is said to be positive if $\left<Tx,x\right>\geq 0$ for all $x\in\mathcal{H}.$ For brevity, we may write $T\geq O$ to denote a positive operator $T$.
\begin{lemma}\cite{Kittaneh_PubRes_1988}\label{l1}
Let $T\in\mathcal{B}(\mathcal{H})$ be positive and $ x \in \mathcal{H} $ with $ \Vert x \Vert=1. $ Then\\

 $ (i)\, \langle Tx, x \rangle^{r} \leq \langle T^{r}x, x \rangle $ for $ r \geq 1;$

 $ (ii)\, \langle T^{r}x, x \rangle \leq \langle Tx, x \rangle^{r}$ for $ 0 < r \leq 1. $
\end{lemma}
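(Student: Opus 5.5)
The natural route is the spectral theorem combined with Jensen's inequality for a probability measure. Since $T\geq O$, it has a spectral resolution $T=\int_{\sigma(T)}\lambda\,dE_\lambda$, where $\sigma(T)\subseteq[0,\infty)$ is compact. For a fixed unit vector $x$, put $\mu_x(\cdot)=\langle E(\cdot)x,x\rangle$. This is a positive Borel measure on $\sigma(T)$ with total mass $\mu_x(\sigma(T))=\langle x,x\rangle=\|x\|^2=1$, so it is a probability measure. By the functional calculus, for every $r>0$,
\[
\langle T^{r}x,x\rangle=\int_{\sigma(T)}\lambda^{r}\,d\mu_x(\lambda),\qquad \langle Tx,x\rangle=\int_{\sigma(T)}\lambda\,d\mu_x(\lambda).
\]

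For part (i), take $r\geq1$. The function $f(t)=t^{r}$ is convex on $[0,\infty)$, so Jensen's inequality for the probability measure $\mu_x$ gives
\[
\Big(\int\lambda\,d\mu_x\Big)^{r}\leq\int\lambda^{r}\,d\mu_x,
\]
which is exactly $\langle Tx,x\rangle^{r}\leq\langle T^{r}x,x\rangle$.

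For part (ii), take $0<r\leq1$. Now $t^{r}$ is concave on $[0,\infty)$, and Jensen's inequality reverses to give $\langle T^{r}x,x\rangle\leq\langle Tx,x\rangle^{r}$.

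An alternative that avoids measures is a Hölder argument. Apply Cauchy–Schwarz or Hölder to $\langle T^{r}x,x\rangle$ with the decomposition $x=\int dE\,x$. This proves (ii) first, by Hölder with exponents $1/r$ and $1/(1-r)$. Part (i) then follows from (ii) applied to the positive operator $T^{r}$ with exponent $1/r\in(0,1]$: this gives $\langle Tx,x\rangle\leq\langle T^{r}x,x\rangle^{1/r}$, and raising both sides to the power $r$ yields (i).

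The only delicate point is justifying that $\mu_x$ is a probability measure and that the functional calculus identities above hold. Both are standard consequences of the spectral theorem for bounded self-adjoint operators. Everything else is a direct application of scalar convexity or concavity, so I do not expect a genuine obstacle.
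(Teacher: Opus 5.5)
Your proof is correct. The paper gives no proof of its own here and simply quotes this McCarthy inequality from Kittaneh; the spectral-measure-plus-Jensen argument you give, with $\mu_x$ a probability measure because $\|x\|=1$, is the standard justification behind that citation. Your H\"older variant is also valid ($r=1$ is trivial, and $(T^r)^{1/r}=T$ holds by the functional calculus), though it does not actually avoid measures, since H\"older is applied in $L^p(\mu_x)$.
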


 In the following, we have the converse of H\"{o}lder-McCarthy inequality as an extension of the Kantorovich inequality. For the used ordering, we recall that self-adjoint operators $S,T$ that satisfy $T-S\geq O$ are described by $T\geq S$.

\begin{lemma}\cite{pec}\label{l2}
Let $T\in\mathcal{B}(\mathcal{H})$ be a positive operator satisfying $ mI \leq T \leq MI $
for some scalars $ 0 < m< M. $ Then, for every unit vector $ x \in \mathcal{H} $,

 $ (i)\, \langle T^{r} x, x \rangle \leq K(m, M, r) \langle Tx, x \rangle^{r} $ for $r > 1;$

 $ (ii)\, K(m, M, r) \langle Tx, x \rangle^{r} \leq \langle T^{r}x, x \rangle$ for $ 0 \leq r \leq 1, $
where
$$ K(m, M, r)=\dfrac{(mM^{r}-Mm^{r})}{(r-1)(M-m)}\left( \dfrac{r-1}{r}\dfrac{M^{r}-m^{r}}{mM^{r}-Mm^{r}} \right)^{r} $$
for every $ r \in \mathbb{R}. $
\end{lemma}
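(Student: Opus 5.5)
The plan is the standard Mond–Pečarić argument: compare $t^{r}$ with its chord on $[m,M]$, transfer this to $T$ by functional calculus, and then optimize a scalar function of one variable. Write $L(t)=a+bt$ for the chord of $t\mapsto t^{r}$ through $(m,m^{r})$ and $(M,M^{r})$, so that
$$b=\frac{M^{r}-m^{r}}{M-m},\qquad a=\frac{Mm^{r}-mM^{r}}{M-m}.$$

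\textbf{Case $r>1$.} Here $t^{r}$ is convex, so $t^{r}\le L(t)$ on $[m,M]$. Since $\sigma(T)\subseteq[m,M]$, the continuous functional calculus gives $T^{r}\le aI+bT$. Hence, for a unit vector $x$ with $s:=\langle Tx,x\rangle\in[m,M]$,
$$\langle T^{r}x,x\rangle\le a+bs=f(s)\,s^{r},\qquad f(s):=\frac{a+bs}{s^{r}}.$$
It then remains to show $\sup_{s>0}f(s)=K(m,M,r)$.
\begin{itemize}
\item The condition $f'(s)=0$ reads $bs=r(a+bs)$, which gives
$$s_0=\frac{ra}{(1-r)b}=\frac{r\,(mM^{r}-Mm^{r})}{(r-1)(M^{r}-m^{r})}>0,$$
since $mM^{r}-Mm^{r}>0$ for $r>1$.
\item At this point $a+bs_0=\dfrac{a}{1-r}=\dfrac{mM^{r}-Mm^{r}}{(r-1)(M-m)}$.
\item Also $s_0^{-r}=\left(\dfrac{r-1}{r}\,\dfrac{M^{r}-m^{r}}{mM^{r}-Mm^{r}}\right)^{r}$.
\item Multiplying these two factors gives exactly $f(s_0)=K(m,M,r)$.
\item To see that $s_0$ is a global maximum, use a sign analysis. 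The sign of $f'(s)$ is that of $(1-r)bs-ra$. Since $1-r<0$, $b>0$ and $a<0$, this is positive for $s<s_0$ and negative for $s>s_0$.
\end{itemize}
Combining, $\langle T^{r}x,x\rangle\le f(s)\langle Tx,x\rangle^{r}\le K(m,M,r)\langle Tx,x\rangle^{r}$, which is (i).

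\textbf{Case $0<r<1$.} Now $t^{r}$ is concave, so the chord lies below it and $T^{r}\ge aI+bT$. This gives $\langle T^{r}x,x\rangle\ge f(s)s^{r}$, and we need $\inf_{s\in[m,M]}f(s)\ge K(m,M,r)$.
\begin{itemize}
\item Now $a>0$ and $1-r>0$, so the sign of $f'$ is that of $(1-r)bs-ra$. It changes from negative to positive at the same $s_0$, so $s_0$ is a global minimum on $(0,\infty)$.
\item The same computation gives $f(s_0)=K(m,M,r)$.
\item The signs inside the formula for $K$ ($r-1<0$ and $mM^{r}-Mm^{r}<0$) combine so that both factors are positive. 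Hence the same closed form is valid.
\end{itemize}
Thus $\langle T^{r}x,x\rangle\ge f(s)s^{r}\ge K(m,M,r)\langle Tx,x\rangle^{r}$, which is (ii).

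\textbf{Endpoints $r=0$ and $r=1$.} The formula for $K$ is degenerate here, so it is interpreted by its limit. At $r=1$ the limit is $K=1$, and at $r=0$ it is $K=1$ by a direct check (or the formula gives $1$ after simplification). In both cases (ii) is then trivial: it reads $1\le1$ at $r=0$ and $\langle Tx,x\rangle\le\langle Tx,x\rangle$ at $r=1$.

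\textbf{Main obstacle.} The only real obstacle is the sign bookkeeping when $r<1$. One must make sure the critical point gives a minimum rather than a maximum, and that the fractional power in $K$ is taken of a positive quantity. I would handle both by writing everything in terms of $(1-r)a$ and $(1-r)b$, since each has a definite sign in each case.
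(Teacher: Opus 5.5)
Your proof is correct and follows the standard Mond--Pe\v{c}ari\'c method used in the source \cite{pec}: the chord bound $T^{r}\le aI+bT$ (reversed for $0<r<1$), then optimizing $f(s)=(a+bs)s^{-r}$ over $(0,\infty)$ via the sign of $(1-r)bs-ra$. The paper itself gives no proof and only cites that source. One small imprecision: at $r=0$ the factor $\frac{r-1}{r}$ makes the closed form undefined, so $K(m,M,0)=1$ must come from the limit, as you also say, not from simplifying the formula. This matches how the paper takes $K(m,M,1)=1$ in Remark~\ref{r1}.
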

We remark that $ K(m, M, r) $ is an extension of the Kantorovich constant $ \dfrac{(M+m)^{2}}{4mM}.$ In fact, $ K(m, M, 2) = K(m, M, -1)= \dfrac{(M+m)^{2}}{4mM}=\left( \dfrac{M\nabla m}{M \sharp m}\right)^{2}, $ where $M\nabla m=\frac{M+m}{2}$ and $M\sharp m=M^{1/2}m^{1/2}.$\\
The following lemma is known as the mixed Schwarz inequality.
\begin{lemma}\cite{Kato_MathAnn_1952}\label{l3}
Let $T\in\mathcal{B}(\mathcal{H})$ and $ x \in \mathcal{H}.$ Then
$$ \vert \langle Tx, x \rangle \vert^{2} \leq \langle \vert T \vert x, x \rangle \langle \vert T^{*} \vert x, x \rangle.$$
\end{lemma}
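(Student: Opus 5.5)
The plan is to use the polar decomposition $T=U|T|$, where $U$ is a partial isometry with initial space $\overline{\mathrm{ran}|T|}$ and final space $\overline{\mathrm{ran}\,T}$. The mixed Schwarz inequality of Lemma~\ref{l3} then reduces to the ordinary Cauchy--Schwarz inequality once $T$ is split symmetrically into two "square roots". The key identities are
\[
|T^*|=U|T|U^*, \qquad T=U|T|^{1/2}|T|^{1/2}, \qquad |T^*|^{1/2}=U|T|^{1/2}U^*.
\]
The last one follows because $U|T|^{1/2}U^*$ is positive and its square equals $U|T|U^*U|T|^{1/2}U^*$. Since $U^*U$ is the projection onto $\overline{\mathrm{ran}|T|}$, it acts as the identity on $\mathrm{ran}\,|T|^{1/2}\subseteq\overline{\mathrm{ran}|T|}$, so this square is $U|T|U^*=|T^*|$.

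The steps are as follows. First, write
\[
\langle Tx,x\rangle=\langle |T|^{1/2}x,\ |T|^{1/2}U^*x\rangle,
\]
and apply Cauchy--Schwarz to obtain
\[
|\langle Tx,x\rangle|^2\le \| |T|^{1/2}x\|^2\,\| |T|^{1/2}U^*x\|^2 .
\]
Second, identify the two factors. The first is $\langle |T|x,x\rangle$. The second is $\langle U|T|U^*x,x\rangle=\langle |T^*|x,x\rangle$. This gives the claimed inequality for every $x\in\mathcal H$, with no normalization of $x$ needed.

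The main obstacle is justifying $|T^*|=U|T|U^*$. To do this, I would compute
\[
(U|T|U^*)^2=U|T|(U^*U)|T|U^*=U|T|^2U^*=TT^*,
\]
again using that $U^*U$ fixes $\mathrm{ran}|T|$. Since $U|T|U^*$ is positive, uniqueness of the positive square root gives $U|T|U^*=(TT^*)^{1/2}=|T^*|$.

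Alternatively, one could avoid polar decomposition by using the $2\times2$ operator matrix $\begin{pmatrix}|T|&T^*\\ T&|T^*|\end{pmatrix}\ge O$ and testing it against $(x,-\lambda x)$ for suitable $\lambda$. However, the route above is more direct and relies only on standard facts.
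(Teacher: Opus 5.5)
Your proof is correct. The paper gives no proof of its own for this lemma; it only cites Kato. Your polar-decomposition argument is the standard elementary proof.

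The chain of steps all holds:
\begin{itemize}
\item $\langle Tx,x\rangle=\langle |T|^{1/2}x,\,|T|^{1/2}U^*x\rangle$, followed by Cauchy--Schwarz;
\item $(U|T|U^*)^2=U|T|(U^*U)|T|U^*=U|T|^2U^*=TT^*$, using that $U^*U$ fixes $\mathrm{ran}\,|T|$;
\item uniqueness of the positive square root, giving $U|T|U^*=|T^*|$.
\end{itemize}
Replacing $U^*x$ by $U^*y$ gives the two-vector form $|\langle Tx,y\rangle|^2\le\langle |T|x,x\rangle\langle |T^*|y,y\rangle$ at no extra cost. Within the paper, the statement is also just the special case $f(t)=g(t)=t^{1/2}$, $y=x$ of Lemma~\ref{l-main}, which is quoted there too.

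Two cosmetic points:
\begin{itemize}
\item The identity $|T^*|^{1/2}=U|T|^{1/2}U^*$ is never used and can be dropped. As written, its square is also mistyped: it should read $U|T|^{1/2}U^*U|T|^{1/2}U^*$, not $U|T|U^*U|T|^{1/2}U^*$. Its justification also relies on $U|T|U^*=|T^*|$, which you only prove later.
\item The alternative you mention, via positivity of $\begin{bmatrix}|T|&T^*\\ T&|T^*|\end{bmatrix}$, is not really more elementary. Establishing that positivity is usually done by the same polar decomposition, through the factorization
\[
\begin{bmatrix}|T|&T^*\\ T&|T^*|\end{bmatrix}=\begin{bmatrix}I&O\\ O&U\end{bmatrix}\begin{bmatrix}|T|&|T|\\ |T|&|T|\end{bmatrix}\begin{bmatrix}I&O\\ O&U^*\end{bmatrix}.
\]
So the route you chose is the more economical one.
\end{itemize}
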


 \begin{lemma}\cite[Proposition 2.4]{bhu}\label{l4}
Let $ S, T, R \in \mathcal{B}(\mathcal{H})$ be such that $ S $ and $ T $ are positive. If the $ 2\times 2 $ operator matrix $\begin{bmatrix}
S & R^{*}\\
R & T
\end{bmatrix}$
is positive in $\mathcal{B}(\mathcal{H}\oplus\mathcal{H})$, then $ \omega^{2}(R )\leq \frac{1}{2} \Vert S \Vert \Vert T \Vert+ \frac{1}{2} \omega(ST). $
\end{lemma}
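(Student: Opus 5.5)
Lemma \ref{l4} is cited from \cite[Proposition 2.4]{bhu}, but a self-contained argument is short. Take a unit vector $x$ and a unimodular $\lambda$ with $|\langle Rx,x\rangle|=\lambda\langle Rx,x\rangle$. First use positivity of the block matrix to get a Cauchy--Schwarz type bound for $\langle Rx,x\rangle$, and then control the resulting product by a norm-plus-numerical-radius expression. Taking the supremum over $x$ will finish the proof.

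\emph{Step 1.} Positivity of $\begin{bmatrix} S & R^{*}\\ R & T\end{bmatrix}$ gives the classical representation $R=T^{1/2}KS^{1/2}$ for some contraction $K$. Equivalently, it gives the generalized Cauchy--Schwarz inequality $|\langle Ry,z\rangle|^{2}\le\langle Sy,y\rangle\langle Tz,z\rangle$. To see this, test the block matrix on vectors $(y,tz)$ and take the nonnegative discriminant of the resulting quadratic in $t$. With $y=z=x$ this yields
\[
|\langle Rx,x\rangle|\le \|S^{1/2}x\|\,\|T^{1/2}x\| .
\]
This is too crude: squaring only gives $\langle Sx,x\rangle\langle Tx,x\rangle\le\|S\|\|T\|$. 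We need to produce the term $\omega(ST)$.

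\emph{Step 2.} Write $\langle Rx,x\rangle=\langle KS^{1/2}x,T^{1/2}x\rangle$ and apply Buzano's inequality
\[
|\langle a,e\rangle\langle e,b\rangle|\le\tfrac12\bigl(\|a\|\|b\|+|\langle a,b\rangle|\bigr),\qquad \|e\|=1 .
\]
Directly this is awkward, because the unit vector $x$ sits inside both sides. The cleaner route is to write
\[
|\langle Rx,x\rangle|^{2}=\langle Rx,x\rangle\,\overline{\langle Rx,x\rangle}=\langle Rx,x\rangle\langle x,Rx\rangle
\]
and apply Buzano with $a=Rx$, $b=Rx$, $e=x$. That gives $\|Rx\|^{2}$, which is not the desired quantity. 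Instead, use Step 1 in the form $|\langle Rx,x\rangle|^{2}\le\langle Sx,x\rangle\langle Tx,x\rangle$. Then write
\[
\langle Sx,x\rangle\langle Tx,x\rangle=\langle Sx,x\rangle\langle x,Tx\rangle ,
\]
and apply Buzano with $a=Sx$, $b=Tx$, $e=x$:
\[
\langle Sx,x\rangle\langle Tx,x\rangle\le\tfrac12\bigl(\|Sx\|\|Tx\|+|\langle Sx,Tx\rangle|\bigr)=\tfrac12\bigl(\|Sx\|\|Tx\|+|\langle TSx,x\rangle|\bigr).
\]

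\emph{Step 3.} Bound $\|Sx\|\|Tx\|\le\|S\|\|T\|$ and $|\langle TSx,x\rangle|\le\omega(TS)$. Since $S,T\ge O$, we have $(ST)^{*}=TS$, and hence $\omega(TS)=\omega(ST)$. Taking the supremum over unit $x$ gives
\[
\omega^{2}(R)\le\tfrac12\|S\|\|T\|+\tfrac12\omega(ST).
\]
The only delicate step is the mixed Cauchy--Schwarz inequality from block positivity, which is standard.
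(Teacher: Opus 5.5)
The paper gives no proof of this lemma; it is simply imported from \cite[Proposition 2.4]{bhu}, so there is no internal argument to compare against. Your proof is correct and complete, and it is the standard route for bounds of this shape: a mixed Cauchy--Schwarz inequality from block positivity, followed by Buzano's inequality. The pieces check out as follows:
\begin{itemize}
\item The inequality $|\langle Rx,x\rangle|^{2}\le\langle Sx,x\rangle\langle Tx,x\rangle$ is exactly Lemma \ref{l6} with $y=x$.
\item Buzano with $a=Sx$, $b=Tx$, $e=x$ bounds $\langle Sx,x\rangle\langle Tx,x\rangle$ itself. This product equals $|\langle Sx,x\rangle\langle x,Tx\rangle|$, since both factors are nonnegative.
\item The identity $\langle Sx,Tx\rangle=\langle TSx,x\rangle$, together with $\omega(TS)=\omega((ST)^{*})=\omega(ST)$, finishes the argument.
\end{itemize}

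For a clean write-up, you can drop three things that are never used: the scalar $\lambda$, the contraction factorization $R=T^{1/2}KS^{1/2}$, and the abandoned choice $a=b=Rx$. One small point in Step 1: testing on $(y,tz)$ with real $t$ only controls $\Re\langle Ry,z\rangle$. So either first replace $z$ by a suitable unimodular multiple, or simply cite Lemma \ref{l6}.

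Your argument actually yields the slightly sharper bound $\omega^{2}(R)\le\tfrac12\sup_{\Vert x\Vert=1}\bigl(\Vert Sx\Vert\Vert Tx\Vert+|\langle TSx,x\rangle|\bigr)$.
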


 \begin{lemma}\cite{Sababheh_CAOT_2024}\label{l5}
Let $ T \in \mathcal{CR}(\mathcal{H}). $ Then $ \vert \langle Tx, y \rangle \vert^{2} \leq \langle \vert T \vert^{2} x, x \rangle \langle T T^{\dagger} y, y \rangle $
for $ x, y \in \mathcal{H}.$
\end{lemma}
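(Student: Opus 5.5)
The plan is to write $\langle Tx,y\rangle$ in a form where the Cauchy--Schwarz inequality separates the two vectors. Using \eqref{8}, we have $T = |T^*|^2 (T^\dagger)^*$. An even more convenient identity is $T = TT^\dagger T$, which gives
\[
\langle Tx,y\rangle=\langle TT^{\dagger}Tx,y\rangle=\langle Tx,(TT^{\dagger})^*y\rangle=\langle Tx,TT^{\dagger}y\rangle .
\]
The last step uses that $TT^\dagger$ is self-adjoint, which is one of the four defining Moore--Penrose equations.

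Next we apply the Cauchy--Schwarz inequality in $\mathcal H$:
\[
|\langle Tx,y\rangle|^2\le \|Tx\|^2\,\|TT^{\dagger}y\|^2 .
\]
The first factor is $\|Tx\|^2=\langle T^*Tx,x\rangle=\langle |T|^2x,x\rangle$.

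For the second factor, we check that $P:=TT^\dagger$ is an orthogonal projection. It is self-adjoint by definition, and it is idempotent because $P^2=TT^\dagger TT^\dagger=(TT^\dagger T)T^\dagger=TT^\dagger=P$. Therefore
\[
\|Py\|^2=\langle P^*Py,y\rangle=\langle P^2y,y\rangle=\langle TT^\dagger y,y\rangle .
\]
Combining the two factors yields exactly the stated inequality.

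There is no real obstacle here. The only points needing care are that $T\in\mathcal{CR}(\mathcal H)$, so that $T^\dagger$ is bounded and the four defining equations hold, and the idempotency check for $TT^\dagger$. Both are immediate from the definition. No positivity or McCarthy-type lemma is needed.
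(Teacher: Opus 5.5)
Your proof is correct. The paper quotes this lemma from \cite{Sababheh_CAOT_2024} without proof, but your argument (write $\langle Tx,y\rangle=\langle Tx,TT^{\dagger}y\rangle$ via $T=TT^{\dagger}T$, apply Cauchy--Schwarz, and use that $TT^{\dagger}$ is an orthogonal projection so $\|TT^{\dagger}y\|^{2}=\langle TT^{\dagger}y,y\rangle$) is exactly the manipulation the paper uses in the proof of Theorem \ref{t1}, which reduces to this lemma when $f(t)=t$ and $g(t)=1$.
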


 \begin{lemma}\cite{Kittaneh_PubRes_1988}\label{l6}
Let $ A, B, C \in \mathcal{B}(\mathcal{H})$, be such that $ A$ and $ C $ are positive. Then
the $ 2\times 2 $ operator matrix $\begin{bmatrix}
A & B^{*}\\
B & C
\end{bmatrix}$
is positive if and only if $ \vert \langle Bx, y \rangle \vert^{2} \leq \langle A x, x \rangle \langle C y, y \rangle$ for all $x,y\in\mathcal{H}.$
\end{lemma}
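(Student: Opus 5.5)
The plan is to reduce both directions to the quadratic form of the block operator. Write $\mathbb{T}=\begin{bmatrix} A & B^{*}\\ B & C\end{bmatrix}$. For $x,z\in\mathcal{H}$, a direct expansion gives
\begin{equation*}
\left\langle \mathbb{T}\begin{bmatrix} x\\ z\end{bmatrix},\begin{bmatrix} x\\ z\end{bmatrix}\right\rangle=\langle Ax,x\rangle+\langle B^{*}z,x\rangle+\langle Bx,z\rangle+\langle Cz,z\rangle=\langle Ax,x\rangle+2\,\mathrm{Re}\,\langle Bx,z\rangle+\langle Cz,z\rangle .
\end{equation*}
Both implications then become statements about this real expression. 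Throughout, set $a=\langle Ax,x\rangle\ge 0$, $c=\langle Cy,y\rangle\ge 0$ and $b=\langle Bx,y\rangle$.

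\emph{Necessity.} Assume $\mathbb{T}\geq O$ and fix $x,y$. Choose $\theta$ with $e^{-i\theta}b=|b|$. Apply the identity with $z=-\lambda e^{i\theta}y$ for real $\lambda$. This gives $\mathrm{Re}\langle Bx,z\rangle=-\lambda|b|$, so
\begin{equation*}
0\le a-2\lambda|b|+\lambda^{2}c\qquad\text{for all }\lambda\in\mathbb{R}.
\end{equation*}
If $c>0$, this nonnegative real quadratic has nonpositive discriminant, i.e.\ $|b|^{2}\le ac$. If $c=0$, the expression is affine in $\lambda$ and bounded below. This forces $|b|=0$, so the inequality $|b|^2\le ac$ again holds. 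The only care needed is this degenerate case, which is the one spot I would check explicitly.

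\emph{Sufficiency.} Assume $|\langle Bx,y\rangle|^{2}\le\langle Ax,x\rangle\langle Cy,y\rangle$ for all $x,y$. Apply it with $y=z$. Using $\mathrm{Re}\,w\ge-|w|$ and then the hypothesis, the quadratic form is bounded below by
\begin{equation*}
a-2|\langle Bx,z\rangle|+c\ \ge\ a-2\sqrt{ac}+c=\bigl(\sqrt a-\sqrt c\bigr)^{2}\ge 0,
\end{equation*}
where now $c=\langle Cz,z\rangle$. Hence $\mathbb{T}\ge O$. No real obstacle is expected in either direction; the argument is the standard discriminant trick.
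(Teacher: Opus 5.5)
Your proof is correct in both directions. The degenerate case $\langle Cy,y\rangle=0$ is handled properly by the observation that a nonnegative affine function of $\lambda$ must have zero slope.

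The paper gives no proof of this lemma; it quotes it from Kittaneh. Your sufficiency direction is, however, exactly the computation the authors carry out in the proof of Theorem~\ref{t1} to show positivity of the block matrix. They write it as $a+c\ge 2\sqrt{ac}\ge 2|b|\ge -2\,\mathrm{Re}\,b$ rather than as $(\sqrt a-\sqrt c)^2\ge 0$, which is the same inequality.

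For necessity there is an equivalent, shorter route. Since $\mathbb{T}\ge O$, the map $(u,v)\mapsto\langle \mathbb{T}u,v\rangle$ is a positive semidefinite sesquilinear form on $\mathcal{H}\oplus\mathcal{H}$. Apply its Cauchy--Schwarz inequality to $u=(x,0)$ and $v=(0,y)$. Because $\langle \mathbb{T}u,v\rangle=\langle Bx,y\rangle$, $\langle \mathbb{T}u,u\rangle=\langle Ax,x\rangle$ and $\langle \mathbb{T}v,v\rangle=\langle Cy,y\rangle$, this gives $|\langle Bx,y\rangle|^{2}\le\langle Ax,x\rangle\langle Cy,y\rangle$ at once, with the degenerate case absorbed. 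The usual proof of that Cauchy--Schwarz inequality is precisely your discriminant argument, so the difference is packaging, not substance.
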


 \begin{lemma}\cite[Theorem 1]{Kittaneh_PubRes_1988}\label{l-main}
Let $ T \in \mathcal{B}(\mathcal{H})$ and let $ x, y \in \mathcal{H}. $ If $ f $ and $ g $ are two nonnegative continuous functions on $ [0, \infty) $ satisfying $ f(t)g(t)=t$ for all $t \geq 0, $ then
$$ \vert \langle Tx, y \rangle\vert^{2} \leq \langle f^{2}(\vert T \vert)x, x \rangle \langle g^{2}(\vert T^{*} \vert)y, y \rangle.$$
\end{lemma}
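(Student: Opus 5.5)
The statement to prove is Lemma \ref{l-main}: for $T\in\mathcal{B}(\mathcal{H})$ and nonnegative continuous $f,g$ with $f(t)g(t)=t$, we want $\vert \langle Tx, y \rangle\vert^{2} \leq \langle f^{2}(\vert T \vert)x, x \rangle \langle g^{2}(\vert T^{*} \vert)y, y \rangle$. The plan is to factor $T$ through the polar decomposition and then apply Cauchy--Schwarz once.

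\textbf{Polar decomposition.} Write $T=U|T|$, where $U$ is the partial isometry with initial space $\overline{\operatorname{ran}|T|}$. The key identity I will use is
\[
U g(|T|) U^{*} = g(|T^{*}|) \quad \text{on } \overline{\operatorname{ran} T}, \qquad\text{in particular } U g(|T|)=g(|T^*|)U .
\]
To justify it, note that $|T^*|=U|T|U^*$ and that $U^*U$ is the projection onto $\overline{\operatorname{ran}|T|}$. Hence $(U|T|U^*)^n=U|T|^nU^*$ for $n\ge 1$, so $U p(|T|)U^*=p(|T^*|)$ for every polynomial $p$ with $p(0)=0$. By Weierstrass approximation this extends to continuous functions vanishing at $0$.

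If $g(0)\neq 0$, I handle the constant separately. Then $f(0)=0$, and $\ker|T|$ is annihilated by $f(|T|)$. I will avoid the issue by using the intertwining form $Ug(|T|)=g(|T^*|)U$, which holds for all continuous $g$. This follows from $U|T|^n=|T^*|^nU$: indeed $U|T|^n=U|T|U^*U|T|^{n-1}$, and one inducts. The constant term $c$ gives $cU=cU$. Then polynomial approximation on the compact spectrum finishes the argument.

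\textbf{Factorization.} Since $f(t)g(t)=t$, continuous functional calculus gives $|T|=g(|T|)f(|T|)$. Therefore
\[
T=U|T|=Ug(|T|)f(|T|)=g(|T^*|)\,U f(|T|).
\]

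\textbf{Cauchy--Schwarz.} We get
\[
|\langle Tx,y\rangle|=|\langle Uf(|T|)x,\, g(|T^*|)y\rangle|\le \|Uf(|T|)x\|\,\|g(|T^*|)y\|.
\]
Since $U$ is a contraction, $\|Uf(|T|)x\|\le\|f(|T|)x\|$, and the squares of the two norms are $\langle f^2(|T|)x,x\rangle$ and $\langle g^2(|T^*|)y,y\rangle$. This gives the claimed inequality.

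\textbf{Main obstacle.} The only delicate step is the intertwining relation $Ug(|T|)=g(|T^*|)U$ for a general continuous $g$. It holds for monomials by induction, then for polynomials, and then by norm limits on $[0,\|T\|]$. Everything else is routine.
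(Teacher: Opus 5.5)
Your proof is correct: $|T^*|=U|T|U^*$ and $U^*U|T|=|T|$ give $Up(|T|)=p(|T^*|)U$ for polynomials, uniform approximation on $[0,\|T\|]$ extends this to every continuous $g$, and then $T=g(|T^*|)\,Uf(|T|)$ together with one Cauchy--Schwarz and $\|U\|\le 1$ finishes it. Your side remark about $Ug(|T|)U^*$ on $\overline{\operatorname{ran} T}$ and the case $g(0)\neq 0$ is unnecessary, and the paper itself gives no proof of this lemma (it is quoted from Kittaneh); your argument is essentially the standard polar-decomposition proof of that result.
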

In the sequel, we will repeatedly deal with continuous functions $f,g$, satisfying $f(t)g(t)=t$ for all $t\geq 0.$ For convenience, we will call such functions conjugate functions.

 Extending the McCarthy inequality, we have the following interesting lemma, in which $Sp(T)$ refers to the spectrum of $T$.
\begin{lemma}\cite[Theorem 1.2]{pec}\label{l-main-2}
Let $ T \in \mathcal{ B}(\mathcal{H})$ be a self-adjoint operator with $ Sp(T) \subseteq [m, M] $ for some scalars $ m < M. $ If $ h(t) $ is a convex function on $ [m, M], $ then
$$ h( \langle Tx, x\rangle ) \leq \langle h(T)x, x \rangle,$$
for every unit vector $ x \in \mathcal{H}.$
\end{lemma}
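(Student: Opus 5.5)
This is the Jensen-type operator inequality of Mond and Pečarić. I would prove it with the continuous functional calculus and the spectral theorem, reducing to the scalar Jensen inequality for a probability measure.

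\emph{Spectral reduction.} Fix a unit vector $x\in\mathcal{H}$. Since $T$ is self-adjoint with $Sp(T)\subseteq[m,M]$, let $E$ be its spectral measure. Then $\mu_x(\cdot)=\langle E(\cdot)x,x\rangle$ is a positive Borel measure on $[m,M]$ with total mass $\|x\|^2=1$. The functional calculus gives
\[
\langle Tx,x\rangle=\int_{[m,M]} t\,d\mu_x(t)
\qquad\text{and}\qquad
\langle h(T)x,x\rangle=\int_{[m,M]} h(t)\,d\mu_x(t).
\]
The first quantity lies in $[m,M]$, because $mI\le T\le MI$, so $h$ can be evaluated there.

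\emph{Supporting line.} Convexity of $h$ on $[m,M]$ supplies a supporting line at $t_0=\langle Tx,x\rangle$: there is $\alpha$ with
\[
h(t)\ge h(t_0)+\alpha(t-t_0)\quad\text{for all } t\in[m,M].
\]
For $t_0$ interior, take $\alpha$ to be a one-sided derivative.

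\emph{Integration.} Integrating the supporting-line inequality against $\mu_x$ gives
\[
\int h\,d\mu_x\ \ge\ h(t_0)+\alpha\left(\int t\,d\mu_x-t_0\right)=h(t_0),
\]
which is exactly the claim.

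Alternatively, one can avoid measures. The scalar inequality $h(t)\ge h(t_0)+\alpha(t-t_0)$ on $Sp(T)$ transfers, via the functional calculus, to the operator inequality
\[
h(T)\ge h(t_0)I+\alpha\,(T-t_0I).
\]
Taking $\langle\cdot\,x,x\rangle$ of both sides then gives the result directly.

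\emph{Main obstacle: endpoints.} The only delicate step is the supporting line when $t_0$ is an endpoint, say $t_0=m$. There the one-sided derivative may be $-\infty$, or $h$ may fail to be continuous at $m$; in the latter case I take $h(T)$ to be defined via Borel functional calculus, or assume $h$ is continuous as is implicit in the statement. This is handled by noting that $t_0=m$ forces $\langle (T-mI)x,x\rangle=0$. Since $T-mI\ge O$, this means $(T-mI)^{1/2}x=0$, hence $Tx=mx$, so $\mu_x$ is the point mass at $m$. Then both sides equal $h(m)$ and the inequality holds with equality. The case $t_0=M$ is symmetric.
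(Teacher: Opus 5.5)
Your proof is correct: the paper does not prove this lemma but imports it from Mond--Pe\v{c}ari\'c (Theorem 1.2 of the cited monograph), and your argument is the standard proof of that result. You take a supporting line at $t_0=\langle Tx,x\rangle$ and then either integrate it against the spectral measure $\langle E(\cdot)x,x\rangle$ or transfer it to $h(T)\ge h(t_0)I+\alpha(T-t_0I)$ via the functional calculus. Your endpoint case, where $\langle (T-mI)x,x\rangle=0$ together with $T-mI\ge O$ forces $Tx=mx$, is handled correctly, and so is your remark that defining $h(T)$ needs continuity of $h$ on $Sp(T)$ or the Borel calculus; these are the only delicate points.
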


 Our target in this paper is to discuss further new bounds of the numerical radius, implementing the Moore-Penrose inverse. Our results will add a new set of bounds to the existing literature. While some bounds are new, and some refine selected known bounds.

\section{Main Results}\label{se2}
We begin our analysis in this section starting with some inner product results that involve the Moore-Penrose inverse, with direct applications towards numerical radii bounds.
\begin{lemma}\label{2-1}
Let $ T \in \mathcal{CR}(\mathcal{H})$ and $ p \geq q > 1,$ with $ \frac{1}{p}+ \frac{1}{q}=1.$
If $mI \leq TT^{*} \leq MI $ for some scalars $ 0<m<M, $ then for every unit vectors
$ x, y \in \mathcal{H},$ \\

 $$ \vert \langle Tx, y \rangle\vert + \delta(x, y) \leq \frac{1}{p}\left\langle (T^{\dagger} T)^{p/2}x, x \right\rangle +
\frac{1}{qK_{q/2}} \left\langle \vert T^{*} \vert^{q}y, y\right\rangle, $$
where $ \delta(x, y)=\frac{1}{p}\left( \left\langle T^{\dagger}Tx, x \right\rangle^{p/4}-\left\langle TT^{*}y, y \right\rangle^{q/4} \right)^{2}$ and $K_{q/2}=K(m, M, q/2).$\\
\end{lemma}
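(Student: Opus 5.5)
The plan is to obtain the basic Cauchy--Schwarz type estimate
$$\vert \langle Tx, y \rangle\vert \leq \left\langle T^{\dagger}Tx, x \right\rangle^{1/2}\left\langle TT^{*}y, y \right\rangle^{1/2},$$
then apply the refined Young inequality \eqref{y1} to the two factors on the right, and finally convert the resulting scalar powers back into inner products using Lemma \ref{l1} on one side and Lemma \ref{l2} on the other.

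For the first step we use $T = TT^{\dagger}T$ and write
$$\langle Tx,y\rangle = \langle T(T^{\dagger}T)x, y\rangle = \langle T^{\dagger}T x, T^{*}y\rangle .$$
Since $P:=T^{\dagger}T$ is self-adjoint and idempotent, it is an orthogonal projection. The ordinary Cauchy--Schwarz inequality then gives
$$\vert\langle Tx,y\rangle\vert \le \Vert Px\Vert\,\Vert T^{*}y\Vert = \langle Px,x\rangle^{1/2}\langle TT^{*}y,y\rangle^{1/2}.$$
Alternatively, this follows from Lemma \ref{l5} applied to $T^{*}$, which lies in $\mathcal{CR}(\mathcal{H})$ with $(T^*)^\dagger=(T^\dagger)^*$.

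Next set $a=\langle T^{\dagger}Tx,x\rangle^{1/2}$ and $b=\langle TT^{*}y,y\rangle^{1/2}$. By \eqref{y1},
$$ab+\frac1p\bigl(a^{p/2}-b^{q/2}\bigr)^2\le \frac{a^p}{p}+\frac{b^q}{q}.$$
Here $a^{p/2}=\langle T^{\dagger}Tx,x\rangle^{p/4}$ and $b^{q/2}=\langle TT^{*}y,y\rangle^{q/4}$, so the correction term is exactly $\delta(x,y)$. Combined with the first step, this gives
$$\vert\langle Tx,y\rangle\vert+\delta(x,y)\le \frac{a^p}{p}+\frac{b^q}{q}.$$
If $a$ or $b$ vanishes, the inequality is treated directly, or by continuity.

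It remains to bound the two terms on the right, using the exponent ranges forced by the hypotheses. Since $p\ge q$ and $\frac1p+\frac1q=1$, we have $p\ge 2$ and $1<q\le 2$.
\begin{itemize}
\item For the first term, $a^p=\langle T^{\dagger}Tx,x\rangle^{p/2}$ with $p/2\ge1$. Lemma \ref{l1}(i), applied to the positive operator $T^{\dagger}T$, gives $a^p\le\langle (T^{\dagger}T)^{p/2}x,x\rangle$.
\item For the second term, $b^q=\langle TT^{*}y,y\rangle^{q/2}$ with $0<q/2\le1$. Lemma \ref{l2}(ii), applied to $TT^{*}$ with $mI\le TT^{*}\le MI$, gives $K_{q/2}\langle TT^{*}y,y\rangle^{q/2}\le\langle (TT^{*})^{q/2}y,y\rangle=\langle\vert T^{*}\vert^{q}y,y\rangle$.
\end{itemize}
Dividing the second bound by $K_{q/2}$ and substituting both bounds yields the claim.

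The main obstacle is a technical point about the constant in the second bullet. We need $K_{q/2}>0$ in order to divide by it. We also need to handle the endpoint $q=2$, where $K(m,M,1)$ is a $0/0$ expression and must be read as its limiting value $1$; in that case the bound is just the identity $\langle TT^*y,y\rangle=\langle |T^*|^2y,y\rangle$. Positivity for $0<r<1$ should follow from the formula, since $mM^{r}-Mm^{r}$ and $r-1$ are both negative there.
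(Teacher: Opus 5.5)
Your proposal is correct and follows the paper's proof essentially step for step. Both arguments write $\langle Tx,y\rangle=\langle T^{\dagger}Tx,T^{*}y\rangle$, apply Cauchy--Schwarz, then the refined Young inequality \eqref{y1}, and finally Lemma \ref{l1}(i) for the first term and Lemma \ref{l2}(ii) for the second. Your additional checks go beyond what the paper writes out explicitly: the projection identity $\Vert T^{\dagger}Tx\Vert^{2}=\langle T^{\dagger}Tx,x\rangle$, the degenerate cases $a=0$ or $b=0$, positivity of $K_{q/2}$, and the reading $K_{1}=1$ (cf.\ Remark \ref{r1}).
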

\begin{proof}
Let $ x, y \in \mathcal{H}.$ Using Cauchy- Schwarz inequality, inequality (\ref{y1}), Lemma \ref{l1}(i) and Lemma \ref{l2}(ii) respectively, we get
\begin{align*}
\left\vert \langle Tx, y \rangle \right\vert &= \left\vert \langle TT^{\dagger}Tx, y \rangle \right\vert \\
&= \left\vert \langle T^{\dagger}Tx, T^{*}y \rangle \right\vert\\
&\leq \Vert T^{\dagger}Tx \Vert \Vert T^{*}y \Vert\\
&\leq \frac{1}{p} \left\langle T^{\dagger}Tx, x \right\rangle^{p/2} + \frac{1}{q} \left\langle TT^{*}y, y \right\rangle^{q/2} -\delta(x, y)\\
&\leq \frac{1}{p} \left\langle (T^{\dagger}T)^{p/2}x, x \right\rangle + \frac{1}{qK_{q/2}} \left\langle \vert T^{*} \vert^{ q} y, y \right\rangle -\delta(x, y). \\
\end{align*}
This completes the proof.
\end{proof}



Choosing $ x=y $ and taking the supremum over all unit vectors $x \in\mathcal{H}$ in Lemma \ref{2-1}, we obtain the following bound for the numerical radius of an operator $T$.
\begin{theorem}\label{Thm_1_Norm_w}
Let $ T \in \mathcal{CR}(\mathcal{H})$ and $ p \geq q > 1,$ with $ \frac{1}{p}+ \frac{1}{q}=1.$
If $mI \leq TT^{*} \leq MI $ for some scalars $ 0<m<M, $ then
$$ \omega (T ) + \inf_{\Vert x \Vert=1} \delta(x) \leq \left\Vert \frac{1}{p}(T^{\dagger} T )^{p/2} +
\frac{1}{qK_{q/2}} \vert T^{*}\vert^{q} \right\Vert, $$
where $ \delta(x)=\frac{1}{p}\left( \left\langle T^{\dagger}Tx, x \right\rangle^{p/4}-\left\langle TT^{*}x, x \right\rangle^{q/4} \right)^{2} $ and $K_{q/2}=K(m, M, q/2). $

 \end{theorem}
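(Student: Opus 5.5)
The plan is to specialize Lemma \ref{2-1} to $y=x$ and then pass to suprema. Fix a unit vector $x\in\mathcal{H}$ and put $y=x$ in Lemma \ref{2-1}; since $\delta(x,x)=\delta(x)$, this gives
\begin{align*}
\vert \langle Tx, x \rangle\vert + \delta(x) &\leq \frac{1}{p}\left\langle (T^{\dagger} T)^{p/2}x, x \right\rangle + \frac{1}{qK_{q/2}} \left\langle \vert T^{*} \vert^{q}x, x\right\rangle\\
&= \left\langle \left(\frac{1}{p}(T^{\dagger} T)^{p/2} + \frac{1}{qK_{q/2}}\vert T^{*}\vert^{q}\right)x, x\right\rangle .
\end{align*}

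Next I would check that the operator $P:=\frac{1}{p}(T^{\dagger} T)^{p/2} + \frac{1}{qK_{q/2}}\vert T^{*}\vert^{q}$ is positive, so that its numerical radius equals its norm.

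\begin{itemize}
\item $T^{\dagger}T$ is self-adjoint and idempotent, hence an orthogonal projection; in particular it is positive and $(T^{\dagger}T)^{p/2}=T^{\dagger}T$ is well defined.
\item $\vert T^{*}\vert^{q}$ is positive.
\item $K_{q/2}>0$ for $0<m<M$: with $r=q/2$, both $(mM^{r}-Mm^{r})/((r-1)(M-m))$ and $(M^{r}-m^{r})/(mM^{r}-Mm^{r})$ are positive, and $K(m,M,r)\to 1$ when $r\to1$.
\end{itemize}

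Hence $P\geq O$, and the right-hand side above equals $\langle Px,x\rangle\le \sup_{\Vert z\Vert=1}\langle Pz,z\rangle=\omega(P)=\Vert P\Vert$.

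Finally, since $\delta(x)\ge \inf_{\Vert z\Vert=1}\delta(z)$, we get
$$\vert\langle Tx,x\rangle\vert+\inf_{\Vert z\Vert=1}\delta(z)\le \vert\langle Tx,x\rangle\vert+\delta(x)\le\Vert P\Vert$$
for every unit vector $x$. Taking the supremum over $x$ yields $\omega(T)+\inf_{\Vert x\Vert=1}\delta(x)\le\Vert P\Vert$.

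The only point that is not purely formal is the positivity of $P$, specifically the sign of $K_{q/2}$. This needs attention in the case $q/2<1$, where Lemma \ref{l2}(ii) is applied. There, the factors $r-1$ and $mM^{r}-Mm^{r}$ are both negative, so $K_{q/2}$ is positive.
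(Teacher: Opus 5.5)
Your proof is correct and follows the paper's argument exactly: set $y=x$ in Lemma~\ref{2-1}, bound $\langle Px,x\rangle$ by $\Vert P\Vert$, and take the supremum over unit vectors. Two small remarks. First, the positivity of $P$ is not actually needed, since $\langle Px,x\rangle\le\Vert P\Vert$ holds for any self-adjoint $P$. Second, in your bullet it is the product $\frac{r-1}{r}\cdot\frac{M^{r}-m^{r}}{mM^{r}-Mm^{r}}$ that is positive when $r<1$; the ratio $\frac{M^{r}-m^{r}}{mM^{r}-Mm^{r}}$ on its own is negative, as your closing paragraph in effect recognizes.
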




We next identify the equality cases in Theorem\ref{Thm_1_Norm_w}, which is essential for sharp-bound applications.

\begin{remark}
Equality in Theorem \ref{Thm_1_Norm_w} holds if and only if there exists a unit vector 
$x_0 \in \mathcal{H}$ such that the following conditions are simultaneously 
satisfied:

(i) $x_0$ is a common eigenvector of $T^{\dagger}T$ and $TT^{*}$;

(ii) $x_0$ attains both the numerical radius of $T$ and the minimum of 
$\delta$, i.e., $|\langle T x_0, x_0 \rangle| = \omega(T)$ and 
$\delta(x_0) = \inf_{\|x\|=1} \delta(x)$;

(iii) the vectors $T^{\dagger}T x_0$ and $T^{*}x_0$ are linearly dependent;

(iv) the equality condition in Young's inequality holds, i.e., 
$\|T^{\dagger}T x_0\|^{p/2} = \|T^{*}x_0\|^{q/2}$; and

(v) $x_0$ is an eigenvector of 
$A := \frac{1}{p} (T^{\dagger}T)^{p/2} + \frac{1}{qK_{q/2}} |T^{*}|^{q}$ 
corresponding to its largest eigenvalue.

In the case $p = q = 2$, condition (iv) simplifies to 
$\|T^{\dagger}T x_0\| = \|T^{*}x_0\|$, and $K_1 = 1$. If $T$ is normal, then 
$T^{\dagger}T$ and $TT^{*}$ commute; hence they possess a common system of 
eigenvectors. Consequently, condition (i) is automatically satisfied whenever 
$x_0$ is an eigenvector of either operator. If $T$ is invertible, then 
$T^{\dagger}T = I$, and condition (i) holds trivially. 
Thus, the bound in Theorem \ref{Thm_1_Norm_w} is generally strict, and equality occurs only 
under very restrictive assumptions.
\end{remark}

\begin{remark}\label{r1}
It is remarkable that by \cite[Theorem 2.54]{pec} for $ p=q=2, $ we have $ K_{1}=K(m,M,1)=1 $ and
since from $ p \geq q > 1 $ with $ \frac{1}{p}+ \frac{1}{q}=1,$ we get $ \frac{1}{2} < \frac{q}{2} \leq 1.$ Therefore $\frac{1}{K_{q/2}} \geq 1. $
\end{remark}
\begin{remark}\label{r2}
For $ T \in \mathcal{CR}(\mathcal{H}),$ it is not difficult to see that the condition $O <mI \leq TT^{\dagger} \leq MI $ is equivalent to $TT^{\dagger} =I $ and $ 0<m \leq 1 \leq M,$ where $I$ is the identity operator in $\mathcal{B}(\mathcal{H}).$
This follows because $TT^{\dagger} $ is a positive projection, so by positivity, $ \sigma (TT^{\dagger}) ={1}.$ Then $TT^{\dagger} =I. $ Also specially if $ T \in \mathbb{M}_{n}, $ then $TT^{\dagger} =I$ implies $T^{\dagger}=T^{-1}. $
\end{remark}
\begin{remark}\label{r1-3}
For $ T \in \mathcal{CR}(\mathcal{H}),$ it is easy to see that if
$ \overline{W(T^{\dagger}T)}^{p/4} \bigcap \overline{W(TT^{*})}^{q/4}= \emptyset,$ then $\inf_{\Vert x \Vert=1} \delta(x) > 0.$
In particular, it is obvious that for $ p=q=2, $
$$ \inf_{\Vert x \Vert=1} \delta(x) > 0 \Leftrightarrow 0 \notin \overline{W(T^{\dagger}T-TT^{*})}.$$
\end{remark}
\color{black}
\begin{example}
Set
$ T= \begin{bmatrix}
0 &2\\
3 & 0
\end{bmatrix},$ and $ p=3, q=\frac{3}{2}.$ Then $ \overline{W(T^{\dagger}T)}^{p/4} \bigcap \overline{W(TT^{*})}^{q/4}= \lbrace 1 \rbrace \bigcap [2^{3/4}, 3^{3/4}]= \emptyset.$ So, in this case $ \inf_{\Vert x \Vert=1} \delta(x) > 0. $
\end{example}


 In line with the proof of Lemma \ref{2-1}, using Cauchy- Schwarz inequality, inequality (\ref{y2}), Lemma \ref{l1}(i) and convexity of $ f(t)=t^{r} $ for $ r=2, 3, \ldots $ on $ [0, \infty), $ respectively, we obtain the following lemma.
\begin{lemma}\label{3-1}
Let $ T \in \mathcal{CR}(\mathcal{H})$ and $ p \geq q > 1,$ with $ \frac{1}{p}+ \frac{1}{q}=1.$
Then for every unit vectors $ x, y \in \mathcal{H},$ and $ r=2, 3, \ldots, $\\

 $$ \vert \langle Tx, y \rangle\vert^{r} + \delta(x, y) \leq \frac{1}{p}\left\langle (T^{\dagger} T)^{rp/2}x, x \right\rangle +
\frac{1}{q} \left\langle \vert T^{*} \vert^{rq}y, y\right\rangle, $$
where $ \delta(x, y)=\frac{1}{p}\left( \left\langle T^{\dagger}Tx, x \right\rangle^{rp/4}-\left\langle TT^{*}y, y \right\rangle^{rq/4} \right)^{2}.$
\end{lemma}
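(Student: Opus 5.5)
We follow the proof of Lemma \ref{2-1}, replacing the refined Young inequality (\ref{y1}) by its power version (\ref{y2}). Fix unit vectors $x,y\in\mathcal{H}$. Since $T=TT^{\dagger}T$, we have $\langle Tx,y\rangle=\langle T^{\dagger}Tx,T^{*}y\rangle$. The Cauchy--Schwarz inequality then gives
\[
|\langle Tx,y\rangle|\le \|T^{\dagger}Tx\|\,\|T^{*}y\|.
\]
Because $T^{\dagger}T$ is an orthogonal projection, $\|T^{\dagger}Tx\|^{2}=\langle T^{\dagger}Tx,x\rangle$, and also $\|T^{*}y\|^{2}=\langle TT^{*}y,y\rangle$. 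Set $a=\langle T^{\dagger}Tx,x\rangle^{1/2}$ and $b=\langle TT^{*}y,y\rangle^{1/2}$, so that $|\langle Tx,y\rangle|\le ab$. Raising to the power $r$ preserves this, since $t\mapsto t^{r}$ is increasing on $[0,\infty)$.

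Apply (\ref{y2}) with these $a,b$; it remains valid when $a$ or $b$ is zero, by continuity. This yields
\[
|\langle Tx,y\rangle|^{r}+\frac1p\Bigl(a^{rp/2}-b^{rq/2}\Bigr)^{2}\le\Bigl(\frac{a^{p}}{p}+\frac{b^{q}}{q}\Bigr)^{r}.
\]
Here $a^{rp/2}=\langle T^{\dagger}Tx,x\rangle^{rp/4}$ and $b^{rq/2}=\langle TT^{*}y,y\rangle^{rq/4}$, so the subtracted term is exactly $\delta(x,y)$.

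To bound the right-hand side, use convexity of $t\mapsto t^{r}$ for $r=2,3,\ldots$ as a weighted power-mean inequality with weights $1/p+1/q=1$:
\[
\Bigl(\frac{a^{p}}{p}+\frac{b^{q}}{q}\Bigr)^{r}\le \frac{a^{rp}}{p}+\frac{b^{rq}}{q}
=\frac1p\langle T^{\dagger}Tx,x\rangle^{rp/2}+\frac1q\langle TT^{*}y,y\rangle^{rq/2}.
\]
Now apply Lemma \ref{l1}(i) to the positive operators $T^{\dagger}T$ and $TT^{*}$ with exponents $rp/2\ge1$ and $rq/2\ge1$. Both exponents are at least $1$ because $r\ge2$ and $p,q>1$. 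This gives
\[
\langle T^{\dagger}Tx,x\rangle^{rp/2}\le\langle (T^{\dagger}T)^{rp/2}x,x\rangle,\qquad
\langle TT^{*}y,y\rangle^{rq/2}\le\langle (TT^{*})^{rq/2}y,y\rangle=\langle |T^{*}|^{rq}y,y\rangle,
\]
using $|T^{*}|^{2}=TT^{*}$. Chaining the displayed inequalities gives the claim.

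The only delicate point is the step from $(\frac{a^p}{p}+\frac{b^q}{q})^r$ to $\frac{a^{rp}}{p}+\frac{b^{rq}}{q}$. One must use the convexity (Jensen) inequality with the weights $1/p,1/q$ rather than try to expand the power. One should also check that the exponents handed to Lemma \ref{l1}(i) are at least $1$; this is where the restriction $r\ge2$ (indeed $r\ge1$) matters. Unlike Lemma \ref{2-1}, no Kantorovich-type constant arises, since McCarthy's inequality already points in the needed direction for both operators. So the hypothesis $mI\le TT^{*}\le MI$ is not needed.
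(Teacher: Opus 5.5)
Your argument follows the paper's route: Cauchy--Schwarz, then (\ref{y2}), then convexity of $t\mapsto t^{r}$, then Lemma \ref{l1}(i). However, the step ``apply (\ref{y2})'' fails, because (\ref{y2}) as displayed is false for every $r\ge 2$. Put $a=1$, $b=0$: the left side is $\frac1p$ and the right side is $p^{-r}<\frac1p$. For a strictly positive instance, take $p=q=r=2$, $a=1$, $b=0.1$, which gives $0.50005$ on the left and $0.255025$ on the right. So your remark that it ``remains valid when $a$ or $b$ is zero'' is exactly where it breaks. The correct generalization of (\ref{y1}) has coefficient $\min\{1/p,1/q\}^{r}=p^{-r}$ instead of $1/p$; the case $b=0$ is then an equality. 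With that version your chain only gives $|\langle Tx,y\rangle|^{r}+p^{1-r}\delta(x,y)\le\cdots$, which is weaker than the lemma. The paper's one-line sketch has the same defect.

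The lemma itself is true, and the repair is short. Drop both (\ref{y2}) and the Jensen step, and apply (\ref{y1}) directly to the pair $a^{r},b^{r}$; (\ref{y1}) also holds when an argument is zero. This gives at once
\[
(ab)^{r}+\frac1p\bigl(a^{rp/2}-b^{rq/2}\bigr)^{2}\le\frac{a^{rp}}{p}+\frac{b^{rq}}{q},
\]
which is exactly the scalar inequality your chain needed. Your McCarthy step then finishes the proof, in fact for every real $r\ge 2/q$.

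There is also a smaller slip: the parenthetical ``indeed $r\ge1$'' is wrong. For $r=1$ and $q<2$ the exponent $rq/2=q/2$ is less than $1$, so Lemma \ref{l1}(i) does not apply to $TT^{*}$. That is precisely why Lemma \ref{2-1} needs the Kantorovich constant. The correct threshold is $rq/2\ge 1$, i.e.\ $r\ge 2/q$.
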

In view of Lemma \ref{3-1}, the following interesting inequality is obtained.
\begin{proposition}
Let $ T \in \mathcal{CR}(\mathcal{H})$ and $ p \geq q > 1,$ with $ \frac{1}{p}+ \frac{1}{q}=1.$
If $ r=2, 3, \ldots, $ then\\
$$ \omega^{r} (T ) + \inf_{\Vert x \Vert=1} \delta(x) \leq \left\Vert \frac{1}{p}(T^{\dagger} T )^{rp/2} +
\frac{1}{q} \vert T^{*}\vert^{rq} \right\Vert, $$
where $ \delta(x)=\frac{1}{p}\left( \left\langle T^{\dagger}Tx, x \right\rangle^{rp/4}-\left\langle TT^{*}x, x \right\rangle^{rq/4} \right)^{2}. $
\end{proposition}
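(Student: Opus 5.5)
The plan is to specialize Lemma \ref{3-1} to the diagonal case $y=x$ and then pass to suprema, exactly as Theorem \ref{Thm_1_Norm_w} was obtained from Lemma \ref{2-1}.

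Fix a unit vector $x\in\mathcal{H}$ and an integer $r\geq 2$. Putting $y=x$ in Lemma \ref{3-1} gives
\begin{align*}
\vert\langle Tx,x\rangle\vert^{r}+\delta(x)
&\leq \frac{1}{p}\left\langle (T^{\dagger}T)^{rp/2}x,x\right\rangle+\frac{1}{q}\left\langle \vert T^{*}\vert^{rq}x,x\right\rangle\\
&=\left\langle \Big(\frac{1}{p}(T^{\dagger}T)^{rp/2}+\frac{1}{q}\vert T^{*}\vert^{rq}\Big)x,x\right\rangle ,
\end{align*}
where $\delta(x)=\delta(x,x)$ is precisely the quantity in the statement.

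Next I bound the right-hand side above by $\|A\|$, where $A:=\frac{1}{p}(T^{\dagger}T)^{rp/2}+\frac{1}{q}\vert T^{*}\vert^{rq}$. This holds because $A$ is positive: $T^{\dagger}T$ is an orthogonal projection and $\vert T^{*}\vert\geq O$, so all fractional powers are defined and positive. Hence $\langle Ax,x\rangle\leq\|A\|$ for unit $x$. On the left, I use $\delta(x)\geq \inf_{\|z\|=1}\delta(z)$. Together these give
$$\vert\langle Tx,x\rangle\vert^{r}+\inf_{\|z\|=1}\delta(z)\leq \|A\|$$
for every unit vector $x$.

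Finally I take the supremum over unit vectors $x$. Since $t\mapsto t^{r}$ is increasing and continuous on $[0,\infty)$, we have $\sup_{\|x\|=1}\vert\langle Tx,x\rangle\vert^{r}=\omega^{r}(T)$, which yields the stated inequality.

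There is no real obstacle here. The one point to handle carefully is that the infimum of $\delta$ must be taken before the supremum over $x$, so that a single constant is subtracted uniformly. The positivity of $A$ also deserves a remark, since it is what justifies replacing the numerical radius of $A$ by its norm.
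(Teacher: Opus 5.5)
Your proposal is correct and matches the paper's route: the paper derives this proposition directly from Lemma \ref{3-1} by setting $y=x$, bounding the right-hand side by the norm, and taking the supremum over unit vectors. One caveat on the lemma you cite: inequality (\ref{y2}) as printed in the paper is false (for instance $p=q=r=2$, $a=2$, $b=1$ gives $8.5\leq 6.25$; the correct constant is $1/p^{r}$), but Lemma \ref{3-1} itself still holds, since it follows by applying (\ref{y1}) to $a^{r},b^{r}$ with $a=\|T^{\dagger}Tx\|$, $b=\|T^{*}y\|$ and then using Lemma \ref{l1}(i).
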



The following theorem is a refined and generalized form of the inequalities of \cite[Theorem 2.1]{bhu1}.
\begin{theorem}\label{t1}
Let $ T, S \in \mathcal{CR}(\mathcal{H})$ and $ x, y \in \mathcal{H}$ be arbitrary vectors. If $ f $ and $ g $ are conjugate functions, then
\begin{equation}\label{eq.1}
\vert \langle (T+S)x, y \rangle\vert \leq \sqrt{\left\langle \left( f^{2}(\vert S \vert)+ T^{\dagger}Tg^{2}(\vert T \vert )T^{\dagger}T\right)x, x \right\rangle
\left\langle \left( SS^{\dagger}g^{2}(\vert S^{*} \vert ) SS^{\dagger} +f^{2}(\vert T^{*}\vert) \right)y, y\right\rangle},
\end{equation}
and
\begin{equation}\label{eq.2}
\vert \langle (T+S)x, y \rangle\vert \leq \sqrt{\left\langle \left( f^{2}(\vert T \vert)+ S^{\dagger}Sg^{2}(\vert S \vert )S^{\dagger}S\right)x, x \right\rangle
\left\langle \left( TT^{\dagger}g^{2}(\vert T^{*} \vert ) TT^{\dagger} +f^{2}(\vert S^{*}\vert) \right)y, y\right\rangle}.
\end{equation}

 \end{theorem}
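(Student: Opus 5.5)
Split the inner product as $\langle (T+S)x,y\rangle=\langle Tx,y\rangle+\langle Sx,y\rangle$. Bound each term by Lemma \ref{l-main}, but apply it to $T$ in an "adjoint" form and rewrite using the Moore--Penrose identities. Then combine the two bounds with the elementary inequality $\sqrt{ab}+\sqrt{cd}\le\sqrt{(a+c)(b+d)}$ for $a,b,c,d\ge 0$, which is Cauchy--Schwarz in $\mathbb{R}^2$.

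For \eqref{eq.1}, handle $S$ first. Write $S=SS^{\dagger}S$, so $\langle Sx,y\rangle=\langle Sx, SS^{\dagger}y\rangle$ because $SS^{\dagger}$ is self-adjoint. Lemma \ref{l-main}, applied with the vectors $x$ and $SS^{\dagger}y$, gives
\[
|\langle Sx,y\rangle|^2\le \langle f^2(|S|)x,x\rangle\,\langle g^2(|S^*|)SS^{\dagger}y,SS^{\dagger}y\rangle=\langle f^2(|S|)x,x\rangle\,\langle SS^{\dagger}g^2(|S^*|)SS^{\dagger}y,y\rangle .
\]
For $T$ the roles of $f$ and $g$ must be swapped, so work with $T^*$. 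We have $\langle Tx,y\rangle=\overline{\langle T^*y,x\rangle}$, and $T^*=T^*(T^{\dagger})^*T^*=T^*\,T^{\dagger}T$ because $(T^{\dagger}T)^*=T^{\dagger}T$. Hence $\langle T^*y,x\rangle=\langle T^*y,T^{\dagger}Tx\rangle$. Apply Lemma \ref{l-main} to the operator $T^*$ with the vectors $y$ and $T^{\dagger}Tx$, using $|T^*|$ for the first factor and $|(T^*)^*|=|T|$ for the second:
\[
|\langle Tx,y\rangle|^2\le \langle f^2(|T^*|)y,y\rangle\,\langle T^{\dagger}T g^2(|T|)T^{\dagger}Tx,x\rangle .
\]

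Now add the two square-root bounds. Pair the $x$-factors as $a=\langle f^2(|S|)x,x\rangle$ and $c=\langle T^{\dagger}Tg^2(|T|)T^{\dagger}Tx,x\rangle$, and the $y$-factors as $b=\langle SS^{\dagger}g^2(|S^*|)SS^{\dagger}y,y\rangle$ and $d=\langle f^2(|T^*|)y,y\rangle$. The inequality $\sqrt{ab}+\sqrt{cd}\le\sqrt{(a+c)(b+d)}$ then yields exactly \eqref{eq.1}. Inequality \eqref{eq.2} follows by the same argument with the roles of $T$ and $S$ interchanged: $T$ gets the $TT^{\dagger}$ projection on the $y$ side and $S$ gets the $S^{\dagger}S$ projection via $S^*$.

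The only delicate step is the bookkeeping in the $T$-term. One must confirm that Lemma \ref{l-main} applied to $T^*$ puts $f$ on $|T^*|$ and $g$ on $|T|$, and that the projection $T^{\dagger}T$ is inserted on the correct side; this uses $T^*=T^*TT^{\dagger}$ only in the form $(TT^{\dagger}T)^*=T^*(T^{\dagger})^*T^*$. The closed-range hypothesis is used only to guarantee that $T^{\dagger}$ and $S^{\dagger}$ are bounded and satisfy the Penrose equations.
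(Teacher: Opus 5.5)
Your proof is correct, apart from one misstated identity in the step you yourself call delicate. You write $T^*=T^*(T^{\dagger})^*T^*=T^*\,T^{\dagger}T$, but $T^*=T^*T^{\dagger}T$ is false in general. For $T=\begin{bmatrix}0&1\\0&0\end{bmatrix}$ one has $T^{\dagger}=T^*$ and $T^{\dagger}T=\begin{bmatrix}0&0\\0&1\end{bmatrix}$, so $T^*T^{\dagger}T=O\neq T^*$. The correct grouping is $T^*(T^{\dagger})^*=(T^{\dagger}T)^*=T^{\dagger}T$, which gives $T^*=T^{\dagger}T\,T^*$. That identity yields $\langle T^*y,x\rangle=\langle T^{\dagger}T\,T^*y,x\rangle=\langle T^*y,T^{\dagger}Tx\rangle$; the identity $T^*=T^*TT^{\dagger}$ from your last paragraph is true but not the one being used. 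So your displayed bound for $|\langle Tx,y\rangle|^2$ stands. The adjoint is in fact unnecessary: the condition $f(t)g(t)=t$ is symmetric in $f$ and $g$. You can therefore apply Lemma~\ref{l-main} to $T$ itself with the pair $(g,f)$ and the vectors $T^{\dagger}Tx$ and $y$, using $Tx=T(T^{\dagger}Tx)$.

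Where you genuinely depart from the paper is in how the two bounds are combined. The paper turns each scalar bound into positivity of a $2\times 2$ block operator matrix, using $2|\langle Sx,y\rangle|\le a+b$ and $2|\langle Sx,y\rangle|+2\Re\langle Sx,y\rangle\ge 0$. It then adds the two block matrices so that the off-diagonal entry becomes $S+T$, and reads off \eqref{eq.1} from Lemma~\ref{l6}. You instead add the scalar bounds directly via $\sqrt{ab}+\sqrt{cd}\le\sqrt{(a+c)(b+d)}$.

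The two routes are equivalent: Lemma~\ref{l6} translates such inequalities into block positivity, and adding positive blocks is the operator counterpart of your Cauchy--Schwarz step in $\mathbb{R}^2$. Your version is shorter and avoids Lemma~\ref{l6}. It also makes explicit the interchange of $f$ and $g$ in the $T$-term, which the paper hides behind ``similarly''. The paper's version has the advantage that the positive block matrix with off-diagonal $S+T$ is exactly what it reuses later: in Theorem~\ref{t2} via Lemma~\ref{l4}, in the contraction/Schur-complement/geometric-mean proposition, and in the Hadamard product results. From your inequality that matrix is recovered by one application of Lemma~\ref{l6}, so nothing is lost.
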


 \begin{proof}
Let $ x, y \in \mathcal{H}.$ Using Cauchy- Schwarz inequality, and Lemma \ref{l-main} respectively, we get
\begin{align*}
2 \left\vert \langle Sx, y \rangle \right\vert &= 2 \left\vert \langle SS^{\dagger}Sx, y \rangle \right\vert \\
&= 2 \left\vert \langle Sx, SS^{\dagger}y \rangle \right\vert\\
&\leq 2 \left \langle f^{2}(\vert S \vert) x, x \right \rangle^{1/2} . \left \langle SS^{\dagger}g^{2}(\vert S^{*} \vert ) SS^{\dagger}y, y \right \rangle^{1/2} \\
&\leq \left \langle f^{2}(\vert S \vert) x, x \right \rangle + \left \langle SS^{\dagger}g^{2}(\vert S^{*} \vert ) SS^{\dagger}y, y \right \rangle. \\
\end{align*}
This yields
\begin{align*}
\left\langle
\begin{bmatrix}
f^{2}(\vert S \vert) & S^{*}\\
S & SS^{\dagger}g^{2}(\vert S^{*} \vert ) SS^{\dagger}
\end{bmatrix}
\begin{bmatrix}
x \\
y
\end{bmatrix},
\begin{bmatrix}
x \\
y
\end{bmatrix}
\right\rangle &= \langle f^{2}(\vert S \vert) x, x \rangle+ \langle S^{
*}y, x \rangle\\
&\quad + \langle Sx, y \rangle+\langle SS^{\dagger}g^{2}(\vert S^{*} \vert ) SS^{\dagger}y, y \rangle\\
& \geq 2\vert \langle Sx, y \rangle \vert +2\Re \langle Sx, y \rangle\\
& \geq 0.
\end{align*}

 Thus, $ \begin{bmatrix}
f^{2}(\vert S \vert) & S^{*}\\
S & SS^{\dagger}g^{2}(\vert S^{*} \vert ) SS^{\dagger}
\end{bmatrix} \geq O. $ Similarly, we can also show that \\
$ \begin{bmatrix}
T^{\dagger}Tg^{2}(\vert T \vert )T^{\dagger}T & T^{*}\\
T & f^{2}(\vert T^{*}\vert)
\end{bmatrix} \geq O. $ Therefore,
$$ \begin{bmatrix}
f^{2}(\vert S \vert)+ T^{\dagger}Tg^{2}(\vert T \vert )T^{\dagger}T & (S+T)^{*}\\
S+T & SS^{\dagger}g^{2}(\vert S^{*} \vert ) SS^{\dagger} +f^{2}(\vert T^{*}\vert)
\end{bmatrix} \geq O. $$
Now, by Lemma \ref{l6}, we have (\ref{eq.1}). Interchanging $ S $ and $ T, $
we obtain (\ref{eq.2}), as desired.
\end{proof}

 By setting $ f(t)=t $ and $ g(t)=1 $ in Theorem \ref{t1}, we obtain the following result.
\begin{corollary}\cite[Theorem 2.1]{bhu1}
If $ T, S \in \mathcal{CR}(\mathcal{H})$ and $ x, y \in \mathcal{H}$ are arbitrary vectors, then
\begin{equation*}
\left| \langle (T+S)x, y \rangle \right|
\le
\sqrt{
\left\langle \left( \lvert S \rvert^{2} + T^{\dagger} T \right)x, x \right\rangle
\left\langle \left( S S^{\dagger} + \lvert T^{*} \rvert^{2} \right)y, y \right\rangle
},
\end{equation*}
and
\begin{equation*}
\vert \langle (T+S)x, y \rangle\vert \leq \sqrt{\left\langle \left( \vert T \vert^{2}+ S^{\dagger}S\right)x, x \right\rangle
\left\langle \left( TT^{\dagger} +\vert S^{*}\vert^{2} \right)y, y\right\rangle}.
\end{equation*}
\end{corollary}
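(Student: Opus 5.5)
The plan is to specialize Theorem \ref{t1} to the pair of functions $f(t)=t$ and $g(t)=1$ and then simplify the resulting operator expressions using the defining equations of the Moore-Penrose inverse.

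First I check that $f$ and $g$ are admissible. Both are nonnegative and continuous on $[0,\infty)$, and $f(t)g(t)=t$ for all $t\ge 0$. So they are conjugate functions in the sense used after Lemma \ref{l-main}, and Theorem \ref{t1} applies to every $T,S\in\mathcal{CR}(\mathcal{H})$ and all $x,y\in\mathcal{H}$.

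Next I compute each functional-calculus term. Since $f(t)=t$, we get $f^{2}(\vert S\vert)=\vert S\vert^{2}$ and $f^{2}(\vert T^{*}\vert)=\vert T^{*}\vert^{2}$, and likewise $f^{2}(\vert T\vert)=\vert T\vert^{2}$ and $f^{2}(\vert S^{*}\vert)=\vert S^{*}\vert^{2}$. Since $g(t)=1$, every term $g^{2}(\cdot)$ equals the identity $I$. Hence the middle factors reduce to $T^{\dagger}T\,T^{\dagger}T$, $SS^{\dagger}SS^{\dagger}$, $S^{\dagger}S\,S^{\dagger}S$ and $TT^{\dagger}TT^{\dagger}$.

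The only point needing an argument is that these products collapse, i.e.\ that $T^{\dagger}T$ and $TT^{\dagger}$ are idempotent (and similarly for $S$). This follows from the Moore-Penrose equation $GTG=G$ with $G=T^{\dagger}$:
\[
(T^{\dagger}T)^{2}=(T^{\dagger}TT^{\dagger})T=T^{\dagger}T .
\]
Similarly, $TGT=T$ gives $(TT^{\dagger})^{2}=(TT^{\dagger}T)T^{\dagger}=TT^{\dagger}$, and the same identities hold with $S$ in place of $T$.

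Substituting these simplifications into \eqref{eq.1} gives exactly the first displayed inequality. Substituting them into \eqref{eq.2}, where the roles of $S$ and $T$ are interchanged, gives the second one. There is no real obstacle here; the one step that needs care is the idempotency just verified.
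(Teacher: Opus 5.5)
Your proposal is correct and is essentially the paper's own argument: the corollary is obtained by taking $f(t)=t$ and $g(t)=1$ in Theorem~\ref{t1}, and the middle factors collapse because $T^{\dagger}T$, $TT^{\dagger}$, $S^{\dagger}S$ and $SS^{\dagger}$ are idempotent. Your derivation of that idempotency from the Moore--Penrose equations fills in the one step the paper leaves implicit.
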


Choosing $ f(t)=g(t)=t^{1/2} $ in (\ref{eq.1}) and (\ref{eq.2}), respectively, we have the following result.

 \begin{corollary}\label{Cor_Ned_01}
If $ T, S \in \mathcal{CR}(\mathcal{H})$ and $ x, y \in \mathcal{H}$ are arbitrary vectors, then
\begin{equation*}
\vert \langle (T+S)x, y \rangle\vert \leq \sqrt{\left\langle \left( \vert S \vert+ T^{\dagger}T\vert T \vert T^{\dagger}T\right)x, x \right\rangle
\left\langle \left( SS^{\dagger}\vert S^{*} \vert SS^{\dagger} +\vert T^{*}\vert \right)y, y\right\rangle},
\end{equation*}
and
\begin{equation*}
\vert \langle (T+S)x, y \rangle\vert \leq \sqrt{\left\langle \left( \vert T \vert + S^{\dagger}S\vert S \vert S^{\dagger}S\right)x, x \right\rangle
\left\langle \left( TT^{\dagger}\vert T^{*} \vert TT^{\dagger} +\vert S^{*}\vert \right)y, y\right\rangle}.
\end{equation*}
\end{corollary}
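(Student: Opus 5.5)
This corollary is a direct specialization of Theorem \ref{t1}. The plan is to take $f(t)=g(t)=t^{1/2}$, check that these are conjugate functions, and compute the resulting continuous functional calculus expressions in (\ref{eq.1}) and (\ref{eq.2}).

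First, $f$ and $g$ are nonnegative and continuous on $[0,\infty)$, and $f(t)g(t)=t^{1/2}t^{1/2}=t$ for all $t\ge 0$. So they are conjugate functions in the sense fixed after Lemma \ref{l-main}, and Theorem \ref{t1} applies to the given $T,S\in\mathcal{CR}(\mathcal{H})$ and $x,y\in\mathcal{H}$.

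Next, $f^{2}(t)=g^{2}(t)=t$, so for any positive operator $P$ the functional calculus gives $f^{2}(P)=g^{2}(P)=P$. Applying this with $P=|S|,|T|,|S^{*}|,|T^{*}|$ gives
\begin{align*}
f^{2}(|S|)&=|S|, & g^{2}(|T|)&=|T|, & g^{2}(|S^{*}|)&=|S^{*}|, & f^{2}(|T^{*}|)&=|T^{*}|.
\end{align*}
Substituting into (\ref{eq.1}) yields the first inequality, with $T^{\dagger}T|T|T^{\dagger}T$ and $SS^{\dagger}|S^{*}|SS^{\dagger}$ appearing. Substituting into (\ref{eq.2}) in the same way yields the second, with $S^{\dagger}S|S|S^{\dagger}S$ and $TT^{\dagger}|T^{*}|TT^{\dagger}$.

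There is no real obstacle; the only point to verify is the identity $f^{2}(P)=P$ under the functional calculus for positive $P$. One could also note, though it is not needed, that $T^{\dagger}T|T|T^{\dagger}T=|T|$ because $T^{\dagger}T$ is the projection onto $\overline{\mathrm{ran}}\,T^{*}=\overline{\mathrm{ran}}\,|T|$. The analogous simplification holds for $SS^{\dagger}|S^{*}|SS^{\dagger}=|S^{*}|$. Since the corollary is stated in the unsimplified form, the direct substitution suffices.
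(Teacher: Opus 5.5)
Your proposal is correct and is the paper's argument: the corollary is obtained by substituting $f(t)=g(t)=t^{1/2}$ into (\ref{eq.1}) and (\ref{eq.2}) of Theorem \ref{t1}. Your side remark that $T^{\dagger}T|T|T^{\dagger}T=|T|$ and $SS^{\dagger}|S^{*}|SS^{\dagger}=|S^{*}|$ is also correct, though not needed for the stated form.
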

Upon taking the supremum over unit vectors $x,y$ in Corollary \ref{Cor_Ned_01}, we reach the following norm version.
\begin{corollary}
Let $ T, S \in \mathcal{CR}(\mathcal{H}),$ and let $ f $ and $ g $ be conjugate functions. Then
\begin{equation*}
\lVert T + S \rVert
\le
\sqrt{
\left\lVert
f^{2}(\lvert S \rvert)
+ T^{\dagger} T\, g^{2}(\lvert T \rvert)\, T^{\dagger} T
\right\rVert
\,
\left\lVert
S S^{\dagger}\, g^{2}(\lvert S^{*} \rvert)\, S S^{\dagger}
+ f^{2}(\lvert T^{*} \rvert)
\right\rVert
},
\end{equation*}
and
\begin{equation*}
\lVert T + S \rVert
\le
\sqrt{
\left\lVert
f^{2}(\lvert T \rvert)
+ S^{\dagger} S\, g^{2}(\lvert S \rvert)\, S^{\dagger} S
\right\rVert
\,
\left\lVert
T T^{\dagger}\, g^{2}(\lvert T^{*} \rvert)\, T T^{\dagger}
+ f^{2}(\lvert S^{*} \rvert)
\right\rVert
}.
\end{equation*}
\end{corollary}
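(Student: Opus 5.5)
The plan is to take the supremum in inequality \eqref{eq.1} (respectively \eqref{eq.2}) of Theorem \ref{t1} over unit vectors $x,y$. Note that although the lead-in mentions Corollary \ref{Cor_Ned_01}, the statement is for arbitrary conjugate functions $f,g$, so Theorem \ref{t1} is the right source. Fix conjugate $f,g$ and write
\[
A=f^{2}(\vert S\vert)+T^{\dagger}T\,g^{2}(\vert T\vert)\,T^{\dagger}T,\qquad
B=SS^{\dagger}\,g^{2}(\vert S^{*}\vert)\,SS^{\dagger}+f^{2}(\vert T^{*}\vert).
\]
First I record that both $A$ and $B$ are positive. The operators $f^{2}(\vert S\vert)$ and $f^{2}(\vert T^{*}\vert)$ are positive by the continuous functional calculus, since $f^2\ge 0$. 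The projections $T^{\dagger}T$ and $SS^{\dagger}$ are self-adjoint, so the middle terms are congruences $P\,g^{2}(\cdot)\,P$ with $P=P^*$, and hence are positive as well.

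Next, for unit vectors $x,y$, inequality \eqref{eq.1} gives
\[
\vert\langle (T+S)x,y\rangle\vert\le \langle Ax,x\rangle^{1/2}\langle By,y\rangle^{1/2}.
\]
For positive $A$ and a unit vector $x$, we have $\langle Ax,x\rangle\le \omega(A)=\Vert A\Vert$, and likewise $\langle By,y\rangle\le\Vert B\Vert$. Therefore
\[
\vert\langle (T+S)x,y\rangle\vert\le \sqrt{\Vert A\Vert\,\Vert B\Vert}.
\]

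Finally, I use $\Vert T+S\Vert=\sup_{\Vert x\Vert=\Vert y\Vert=1}\vert\langle (T+S)x,y\rangle\vert$. Taking the supremum on the left yields the first inequality. The second inequality follows identically from \eqref{eq.2}, or simply by interchanging the roles of $S$ and $T$.

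There is no real obstacle here. The only point needing care is justifying positivity of $A$ and $B$, so that $\langle Ax,x\rangle\le\Vert A\Vert$ holds without absolute values and the supremum over $x$ and $y$ can be taken independently in the two factors.
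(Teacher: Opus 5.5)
Your proposal is correct and follows the paper's route: the paper also takes the supremum over unit vectors $x,y$ in the inner-product bounds \eqref{eq.1} and \eqref{eq.2} of Theorem \ref{t1}, using $\langle Ax,x\rangle\le\Vert A\Vert$ and $\langle By,y\rangle\le\Vert B\Vert$. You are also right that Theorem \ref{t1} is the appropriate source for arbitrary conjugate $f,g$, rather than Corollary \ref{Cor_Ned_01}, which is the special case $f(t)=g(t)=t^{1/2}$ cited in the paper's lead-in.
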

\begin{remark}
Let $ A, B \in \mathcal{B}(\mathcal{H})$ be positive operators and $ 0 \leq \nu \leq 1.$ The weighted arithmetic mean, weighted harmonic mean and weighted geometric mean are defined respectively, as
$ A\nabla_{\nu} B=(1-\nu )A+\nu B, \, A!_{\nu} B=((1-\nu )A^{-1}+\nu B^{-1})^{-1} $ and
$ A \sharp_{\nu} B= A^{1/2}(A^{-1/2}BA^{-1/2})^{\nu}A^{1/2}.$ When $\nu=\frac{1}{2},$ it is customary to write $\nabla,!$ and $\sharp$ instead of $\nabla_{\frac{1}{2}}, !_{\frac{1}{2}}$ and $\sharp_{\frac{1}{2}}$, respectively.

 Let $ \sigma $ be an operator mean with representing function $ f(t). $ The operator mean with representing function $ t/f(t) $ is called the dual of $ \sigma $ and denoted by $ \sigma^{\perp}. $ It follows that
$$ \sigma^{\perp}=(\sigma^{0})^{*}= (\sigma^{*})^{0} \, \quad \text{and} \, \quad \sigma^{0}=(\sigma^{*})^{\perp}= (\sigma^{\perp})^{*}. $$
For example, $ \nabla_{\nu}^{\bot}=!_{1-\nu}, \sharp_{\nu}^{\bot} =\sharp_{1-\nu}.$
Now, if we replace $ f, g $ in Theorem \ref{t1}, respectively, with the representing function $ \sigma $ and $ \sigma^{\perp}, $ then for every operator mean $ \sigma, $ we have
\begin{small}
\begin{equation*}
\vert \langle (T+S)x, y \rangle\vert \leq \sqrt{\left\langle \left( ( I \sigma \vert S \vert)^{2}+ T^{\dagger}T(I \sigma^{\perp} \vert T \vert )^{2}T^{\dagger}T\right)x, x \right\rangle
\left\langle \left( SS^{\dagger}(I \sigma^{\perp}\vert S^{*} \vert )^{2} SS^{\dagger} +(I \sigma \vert T^{*}\vert)^{2} \right)y, y\right\rangle},
\end{equation*}
\end{small}
where $ T, S \in \mathcal{CR}(\mathcal{H})$ and $ x, y \in \mathcal{H}$ are arbitrary vectors in $ \mathcal{H}. $ In particular,
\begin{equation*}
\lVert T + S \rVert
\le
\sqrt{
\left\lVert
(I \sigma \lvert S \rvert)^{2}
+ T^{\dagger} T\, ( I \sigma^{\perp}\lvert T \rvert)^{2}\, T^{\dagger} T
\right\rVert
\,
\left\lVert
S S^{\dagger}\, (I \sigma^{\perp}\lvert S^{*} \rvert)^{2}\, S S^{\dagger}
+ (I \sigma \lvert T^{*} \rvert)^{2}
\right\rVert
}.
\end{equation*}
\end{remark}
On the other hand, we have the following refined and generalized form of \cite[Corollary 2.2]{bhu1}.
\begin{proposition}\label{p-2}
Let $ T, S \in \mathcal{CR}(\mathcal{H}),$ and let $ f $ and $ g $ be conjugate functions. Then
\begin{equation}\label{eq.5}
\omega (T+S) \leq \frac{1}{ 2}\left\Vert f^{2}(\vert S \vert)+ T^{\dagger}Tg^{2}(\vert T \vert )T^{\dagger}T+ SS^{\dagger}g^{2}(\vert S^{*} \vert ) SS^{\dagger} +f^{2}(\vert T^{*}\vert) \right\Vert.
\end{equation}
\end{proposition}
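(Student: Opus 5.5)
The plan is to specialize inequality \eqref{eq.1} of Theorem \ref{t1} to the diagonal case $y=x$ and then pass from a geometric mean to an arithmetic mean. For brevity, write
\[
P:=f^{2}(\vert S \vert)+ T^{\dagger}Tg^{2}(\vert T \vert )T^{\dagger}T, \qquad Q:=SS^{\dagger}g^{2}(\vert S^{*} \vert ) SS^{\dagger} +f^{2}(\vert T^{*}\vert).
\]
Both $P$ and $Q$ are positive operators. Indeed, $f^{2}(\vert S\vert)$ and $f^{2}(\vert T^{*}\vert)$ are positive by the continuous functional calculus. The compressions $T^{\dagger}T g^{2}(\vert T\vert) T^{\dagger}T$ and $SS^{\dagger} g^{2}(\vert S^{*}\vert) SS^{\dagger}$ are of the form $EAE$ with $E$ self-adjoint (in fact a projection) and $A\geq O$, so they are positive as well.

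Fix a unit vector $x\in\mathcal{H}$ and put $y=x$ in \eqref{eq.1}. This gives
\[
\vert\langle (T+S)x,x\rangle\vert \leq \langle Px,x\rangle^{1/2}\langle Qx,x\rangle^{1/2}.
\]
Both factors are nonnegative real numbers, so the scalar arithmetic--geometric mean inequality $\sqrt{ab}\leq \frac{a+b}{2}$ applies. It yields
\[
\vert\langle (T+S)x,x\rangle\vert \leq \tfrac12\big(\langle Px,x\rangle+\langle Qx,x\rangle\big)=\tfrac12\langle (P+Q)x,x\rangle.
\]

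Since $P+Q$ is positive, we have $\langle (P+Q)x,x\rangle\leq \omega(P+Q)=\Vert P+Q\Vert$ for every unit vector $x$. This uses the fact that the numerical radius of a self-adjoint (in particular positive) operator equals its norm, which is the equality case of the second inequality in \eqref{eq1} for normal operators. Taking the supremum over all unit vectors $x$ on the left-hand side then gives $\omega(T+S)\leq \frac12\Vert P+Q\Vert$, which is exactly \eqref{eq.5}.

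No step is a real obstacle. The only point needing care is the passage from the product form to the sum form. It must be done before taking the supremum, so that the bound involves the single operator $P+Q$ rather than $\Vert P\Vert$ and $\Vert Q\Vert$ separately. Also, positivity of $P+Q$ must be recorded so that its quadratic form is bounded by its operator norm.
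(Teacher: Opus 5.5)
Your proposal is correct and is essentially the paper's proof: set $y=x$ in \eqref{eq.1}, apply the scalar arithmetic--geometric mean inequality, bound $\tfrac12\langle (P+Q)x,x\rangle$ by $\tfrac12\Vert P+Q\Vert$, and take the supremum over unit vectors. Your explicit check that $P$ and $Q$ are positive, which is what lets the quadratic form be dominated by the norm, is a detail the paper leaves implicit.
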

\begin{proof}
From Theorem \ref{t1}, for every unit vector $ x \in \mathcal{H}, $ we have
\begin{align*}
&\vert \langle (T+S)x, x \rangle\vert \\
&\leq \sqrt{\left\langle \left( f^{2}(\vert S \vert)+ T^{\dagger}Tg^{2}(\vert T \vert )T^{\dagger}T\right)x, x \right\rangle
\left\langle \left( SS^{\dagger}g^{2}(\vert S^{*} \vert ) SS^{\dagger} +f^{2}(\vert T^{*}\vert) \right)x, x\right\rangle}\\
& \leq \frac{1}{2} \left( \left\langle \left( f^{2}(\vert S \vert)+ T^{\dagger}Tg^{2}(\vert T \vert )T^{\dagger}T\right)x, x \right\rangle +\left\langle \left( SS^{\dagger}g^{2}(\vert S^{*} \vert ) SS^{\dagger} +f^{2}(\vert T^{*}\vert) \right)x, x\right\rangle
\right)\\
& \leq \frac{1}{ 2}\left\Vert f^{2}(\vert S \vert)+ T^{\dagger}Tg^{2}(\vert T \vert )T^{\dagger}T+ SS^{\dagger}g^{2}(\vert S^{*} \vert ) SS^{\dagger} +f^{2}(\vert T^{*}\vert) \right\Vert.
\end{align*}
Therefore, taking the supremum over $ \Vert x \Vert=1$ implies the desired result.
\end{proof}

 In the following example we show that Proposition \ref{p-2} can provide sharper bounds than \cite[Corollary 2.2]{bhu1}.
\begin{example}
Let $ T=\begin{bmatrix}
1&0\\
0&1/2
\end{bmatrix}, S=\begin{bmatrix}
0&2\\
1&0
\end{bmatrix}, f(t)=\frac{1+t}{2} ($ the representing function of $ \nabla ) $ and $ g(t)=\frac{2t}{1+t} ($the representing function of $ ! ).$ Then it can be shown with a simple calculation that $ \omega(T+S)\approx 2.27 $ and the right hand side of Proposition \ref{p-2}
is almost $ 2.97,$ but the right hand side of \cite[Corollary 2.2]{bhu1} is $ 3.5, $ showing how Proposition \ref{p-2} can provide better estimates than \cite[Corollary 2.2]{bhu1}.

 \end{example}
The following numerical radius bound follows from Proposition \ref{p-2}, by choosing $ S=T.$
\begin{corollary}
Let $ T \in \mathcal{CR}(\mathcal{H})$ and $ f $ and $ g $ be conjugate functions. Then

 \begin{equation}\label{eq.7}
\omega (T) \leq \frac{1}{4}\left\Vert f^{2}(\vert T \vert)+ T^{\dagger}Tg^{2}(\vert T \vert )T^{\dagger}T+ TT^{\dagger}g^{2}(\vert T^{*} \vert ) TT^{\dagger} +f^{2}(\vert T^{*}\vert) \right\Vert.
\end{equation}
In particular, if $ T $ is invertible, then

 \begin{equation}\label{eq.8}
\omega (T) \leq \frac{1}{4}\left\Vert f^{2}(\vert T \vert)+ g^{2}(\vert T \vert )+ g^{2}(\vert T^{*} \vert ) +f^{2}(\vert T^{*}\vert) \right\Vert.
\end{equation}

 \end{corollary}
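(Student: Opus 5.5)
The plan is to specialize Proposition \ref{p-2} to $S=T$. Once $S=T$ is substituted, the four summands inside the norm of (\ref{eq.5}) are exactly the four summands in (\ref{eq.7}): $f^{2}(\vert S\vert)$ becomes $f^{2}(\vert T\vert)$, and $SS^{\dagger}g^{2}(\vert S^{*}\vert)SS^{\dagger}$ becomes $TT^{\dagger}g^{2}(\vert T^{*}\vert)TT^{\dagger}$. The hypothesis $T\in\mathcal{CR}(\mathcal{H})$ is all Proposition \ref{p-2} needs, since it is applied with both operators equal to $T$.

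On the left-hand side we get $\omega(2T)$. Because $\omega(\cdot)$ is a norm, it is absolutely homogeneous, so $\omega(2T)=2\omega(T)$. Dividing both sides by $2$ turns the factor $\frac12$ into $\frac14$, and this gives (\ref{eq.7}).

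For the invertible case (\ref{eq.8}), note that an invertible $T$ has closed range, and its Moore--Penrose inverse is $T^{\dagger}=T^{-1}$: the inverse satisfies all four defining equations, and the Moore--Penrose inverse is unique. Therefore $T^{\dagger}T=TT^{\dagger}=I$. The middle two terms of (\ref{eq.7}) then reduce to $g^{2}(\vert T\vert)$ and $g^{2}(\vert T^{*}\vert)$, which is (\ref{eq.8}).

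No step is a real obstacle. The only points that need a word of justification are the homogeneity $\omega(2T)=2\omega(T)$ and the identification $T^{\dagger}=T^{-1}$ for invertible $T$; both are immediate from the definitions.
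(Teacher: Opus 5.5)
Your proposal is correct and follows the paper's own argument. The paper also sets $S=T$ in Proposition~\ref{p-2} and uses $\omega(2T)=2\omega(T)$ to get the factor $\frac{1}{4}$. It then obtains the invertible case from $T^{\dagger}=T^{-1}$, so that $T^{\dagger}T=TT^{\dagger}=I$.
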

Considering $ S=0$ and $ T=0,$ respectively, in Proposition \ref{p-2}, we obtain the following numerical radius inequality that is a refined and generalized version of \cite[Corollary 2.3]{bhu1}.
\begin{corollary}
Let $ T \in \mathcal{CR}(\mathcal{H})$ and $ f $ and $ g $ be conjugate functions. Then

 \begin{equation}
\omega (T) \leq \frac{1}{2}
\min \left\{
\left\Vert T^{\dagger}T g^{2}(\lvert T \rvert ) T^{\dagger}T
+ f^{2}(\lvert T^{*}\rvert) \right\Vert,\,
\left\Vert f^{2}(\lvert T \rvert)
+ TT^{\dagger} g^{2}(\lvert T^{*} \rvert ) TT^{\dagger} \right\Vert
\right\}.
\end{equation}
In particular, if $ T $ is invertible, then

 \begin{equation}
\omega (T) \leq \frac{1}{2} \min \left\lbrace \left\Vert g^{2}(\vert T \vert )+f^{2}(\vert T^{*}\vert) \right\Vert, \left\Vert f^{2}(\vert T \vert)+ g^{2}(\vert T^{*} \vert ) \right\Vert \right\rbrace .
\end{equation}

 \end{corollary}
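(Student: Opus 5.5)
The plan is to specialize Proposition \ref{p-2}, taking $S=O$ for the first quantity in the minimum and $T=O$ for the second, and then to take the smaller of the two bounds.

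First, we need to check that the zero operator is admissible. Clearly $O\in\mathcal{CR}(\mathcal{H})$, and $O^{\dagger}=O$, since $G=O$ satisfies the four Penrose equations. Hence $SS^{\dagger}=O$ when $S=O$, and $T^{\dagger}T=O$ when $T=O$. The other terms are $f^{2}(\vert O\vert)=f^{2}(0)I$ and $f^{2}(\vert O^{*}\vert)=f^{2}(0)I$.

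\emph{Case $S=O$.} Substituting into (\ref{eq.5}) gives
\[
\omega(T)\le \tfrac12\left\Vert f^{2}(0)I+T^{\dagger}Tg^{2}(\vert T\vert)T^{\dagger}T+f^{2}(\vert T^{*}\vert)\right\Vert .
\]
Here is the main obstacle: we must get rid of the term $f^{2}(0)I$. It does not vanish automatically, because $f(0)g(0)=0$ only forces one of $f(0),g(0)$ to be zero. I would avoid the issue by not going through (\ref{eq.5}) with $S=O$. Instead I would return to the positive block matrix for $T$ built in the proof of Theorem \ref{t1}:
\[
\begin{bmatrix} T^{\dagger}Tg^{2}(\vert T\vert)T^{\dagger}T & T^{*}\\ T & f^{2}(\vert T^{*}\vert)\end{bmatrix}\ge O .
\]
Lemma \ref{l6} with $y=x$ then gives
\[
\vert\langle Tx,x\rangle\vert\le \langle T^{\dagger}Tg^{2}(\vert T\vert)T^{\dagger}Tx,x\rangle^{1/2}\langle f^{2}(\vert T^{*}\vert)x,x\rangle^{1/2}.
\]
Apply the arithmetic--geometric mean inequality to the right-hand side and take the supremum over unit vectors $x$. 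This yields the first quantity in the minimum.

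\emph{Case $T=O$.} Symmetrically, use the block matrix with diagonal entries $f^{2}(\vert S\vert)$ and $SS^{\dagger}g^{2}(\vert S^{*}\vert)SS^{\dagger}$, whose positivity was shown in the proof of Theorem \ref{t1}, and then rename $S$ as $T$. The same steps give the second quantity. Since both bounds hold, $\omega(T)$ is at most their minimum.

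\emph{Invertible case.} If $T$ is invertible, then $T^{\dagger}=T^{-1}$, so $T^{\dagger}T=TT^{\dagger}=I$. The projections drop out and we obtain the stated simplified bound.
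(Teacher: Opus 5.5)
Your argument is correct, and it is more careful than the paper's. The paper gets the corollary in one line by putting $S=O$ and then $T=O$ into Proposition~\ref{p-2}. As you noticed, this leaves the summand $f^{2}(\vert O\vert)=f^{2}(0)I$ inside the norm. Since every summand is positive, the substitution really gives $\omega(T)\le\frac12\bigl(f^{2}(0)+\Vert T^{\dagger}Tg^{2}(\vert T\vert)T^{\dagger}T+f^{2}(\vert T^{*}\vert)\Vert\bigr)$ and its mirror image. That is the stated bound only when $f(0)=0$. It is not, for instance, for $f\equiv1$, $g(t)=t$, or for the pair $f(t)=\frac{1+t}{2}$, $g(t)=\frac{2t}{1+t}$ used in the paper's own example.

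Your route works with each single-operator positive block matrix from the proof of Theorem~\ref{t1}, then applies Lemma~\ref{l6} with $y=x$, the arithmetic--geometric mean inequality and a supremum. It gives exactly the stated bounds for every pair of conjugate functions; the paper's specialization buys only brevity.

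Two small remarks. First, the paper only asserts ``similarly'' that the block with diagonal entries $T^{\dagger}Tg^{2}(\vert T\vert)T^{\dagger}T$ and $f^{2}(\vert T^{*}\vert)$ is positive. You should justify this by writing $\langle Tx,y\rangle=\langle T(T^{\dagger}Tx),y\rangle$ and applying Lemma~\ref{l-main} to the conjugate pair $(g,f)$, i.e.\ with the roles of $f$ and $g$ interchanged. Second, the block matrix is a detour: that inner-product inequality with $y=x$ is all you need. The second half of your bound is also exactly the paper's later theorem on $h(\omega(T))$ with $h(t)=t$, which uses the same single-operator mechanism.
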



\color{black}
The following theorem is a refined and generalized form of the inequalities of Theorem 2.2 in \cite{bhu1}.
\begin{theorem}\label{t2}
Let $ T, S \in \mathcal{CR}(\mathcal{H})$ and $ f $ and $ g $ be conjugate functions. Then
\begin{align*}
\omega^{2} (T+S) &\leq \frac{1}{2} \left\Vert f^{2}(\vert S \vert)+ T^{\dagger}Tg^{2}(\vert T \vert )T^{\dagger}T \right\Vert \left\Vert SS^{\dagger}g^{2}(\vert S^{*} \vert ) SS^{\dagger} +f^{2}(\vert T^{*}\vert) \right\Vert\\
&+\frac{1}{2} \omega \left( \left( f^{2}(\vert S \vert)+ T^{\dagger}Tg^{2}(\vert T \vert )T^{\dagger}T \right) \left( SS^{\dagger}g^{2}(\vert S^{*} \vert ) SS^{\dagger} +f^{2}(\vert T^{*}\vert) \right) \right).
\end{align*}

 \end{theorem}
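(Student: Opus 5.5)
The plan is to reuse the positive $2\times 2$ operator matrix built in the proof of Theorem \ref{t1} and then apply Lemma \ref{l4}. Write
\[
A:=f^{2}(\vert S \vert)+ T^{\dagger}Tg^{2}(\vert T \vert )T^{\dagger}T, \qquad C:=SS^{\dagger}g^{2}(\vert S^{*} \vert ) SS^{\dagger} +f^{2}(\vert T^{*}\vert).
\]
Both $A$ and $C$ are positive operators. Indeed, $f^{2}(\vert S\vert)$ and $f^{2}(\vert T^{*}\vert)$ are positive by the continuous functional calculus. The terms $T^{\dagger}Tg^{2}(\vert T\vert)T^{\dagger}T$ and $SS^{\dagger}g^{2}(\vert S^{*}\vert)SS^{\dagger}$ have the form $PXP$ with $P$ a self-adjoint projection and $X\ge O$, so they are positive as well.

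First, I recall from the proof of Theorem \ref{t1} that the two matrices
\[
\begin{bmatrix} f^{2}(\vert S \vert) & S^{*}\\ S & SS^{\dagger}g^{2}(\vert S^{*} \vert ) SS^{\dagger}\end{bmatrix}
\quad\text{and}\quad
\begin{bmatrix} T^{\dagger}Tg^{2}(\vert T \vert )T^{\dagger}T & T^{*}\\ T & f^{2}(\vert T^{*}\vert)\end{bmatrix}
\]
are positive in $\mathcal{B}(\mathcal{H}\oplus\mathcal{H})$. Their sum
\[
\begin{bmatrix} A & (T+S)^{*}\\ T+S & C\end{bmatrix}
\]
is therefore positive. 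This is exactly the hypothesis of Lemma \ref{l4}, with $R=T+S$ and with $A$ and $C$ in the roles of the diagonal entries $S$ and $T$ there.

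Second, applying Lemma \ref{l4} gives immediately
\[
\omega^{2}(T+S)\le \tfrac12\Vert A\Vert\,\Vert C\Vert+\tfrac12\,\omega(AC),
\]
which is the asserted inequality once $A$ and $C$ are written out.

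The only point needing care is the positivity of the second block matrix. The proof of Theorem \ref{t1} says only ``similarly'' there. I would verify it by Lemma \ref{l6}, showing
\[
\vert\langle Tx,y\rangle\vert^{2}\le \langle T^{\dagger}Tg^{2}(\vert T\vert)T^{\dagger}Tx,x\rangle\,\langle f^{2}(\vert T^{*}\vert)y,y\rangle .
\]
To do so, write $\langle Tx,y\rangle=\langle T(T^{\dagger}Tx),y\rangle$ and apply Lemma \ref{l-main} to $T$ with the roles of $f$ and $g$ swapped; this is allowed since $gf=t$ as well. That yields
\[
\vert\langle Tx,y\rangle\vert^{2}\le \langle g^{2}(\vert T\vert)T^{\dagger}Tx,T^{\dagger}Tx\rangle\,\langle f^{2}(\vert T^{*}\vert)y,y\rangle,
\]
and the first factor equals $\langle T^{\dagger}Tg^{2}(\vert T\vert)T^{\dagger}Tx,x\rangle$ because $T^{\dagger}T$ is self-adjoint. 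The first matrix is handled analogously, as done in Theorem \ref{t1}. With positivity in hand, the rest is a direct invocation of Lemma \ref{l4}.
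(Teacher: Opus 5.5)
Your proof is correct and follows the paper's route. The paper sums the two positive block matrices from the proof of Theorem \ref{t1} and applies Lemma \ref{l4} with $R=T+S$, exactly as you do, and your explicit check of the second block, via Lemma \ref{l-main} with $f,g$ swapped and then Lemma \ref{l6}, properly fills in the step the paper only labels ``similarly''.
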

\begin{proof}
From of the proof of Theorem \ref{t1}, we have
$$ \begin{bmatrix}
f^{2}(\vert S \vert)+ T^{\dagger}Tg^{2}(\vert T \vert )T^{\dagger}T & (S+T)^{*}\\
S+T & SS^{\dagger}g^{2}(\vert S^{*} \vert ) SS^{\dagger} +f^{2}(\vert T^{*}\vert)
\end{bmatrix} \geq O. $$
Therefore, from Lemma \ref{l4}, the proof is complete.
\end{proof}

 By choosing $ T=S$ in Theorem \ref{t2}, the following numerical radius bound follows.
\begin{corollary}
Let $ T \in \mathcal{CR}(\mathcal{H}),$ and let $ f $ and $ g $ be conjugate functions. Then
\begin{small}
\begin{align*}
\omega^{2} (T) &\leq \frac{1}{8} \left\Vert f^{2}(\vert T \vert)+ T^{\dagger}Tg^{2}(\vert T \vert )T^{\dagger}T \right\Vert \left\Vert TT^{\dagger}g^{2}(\vert T^{*} \vert ) TT^{\dagger} +f^{2}(\vert T^{*}\vert) \right\Vert\\
&\quad\quad+\frac{1}{8} \omega \left( \left( f^{2}(\vert T \vert)+ T^{\dagger}Tg^{2}(\vert T \vert )T^{\dagger}T \right) \left( TT^{\dagger}g^{2}(\vert T^{*} \vert ) TT^{\dagger} +f^{2}(\vert T^{*}\vert) \right) \right).
\end{align*}
\end{small}

 \end{corollary}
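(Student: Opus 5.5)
The plan is to specialize Theorem \ref{t2} to $S=T$ and then track the scalar factors produced by homogeneity. With $S=T$ the operator on the left becomes $T+T=2T$, so the left-hand side of Theorem \ref{t2} is $\omega^{2}(2T)=4\omega^{2}(T)$, since $\omega(\cdot)$ is a norm.

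On the right-hand side, substitute $S=T$ into the two diagonal blocks. They become
\[
A:=f^{2}(\vert T \vert)+ T^{\dagger}Tg^{2}(\vert T \vert )T^{\dagger}T,\qquad B:=TT^{\dagger}g^{2}(\vert T^{*} \vert ) TT^{\dagger} +f^{2}(\vert T^{*}\vert),
\]
so Theorem \ref{t2} reads
\[
4\omega^{2}(T)\le \tfrac12\Vert A\Vert\,\Vert B\Vert+\tfrac12\,\omega(AB).
\]
Dividing by $4$ gives exactly the stated bound with constants $\frac18$.

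The only point that needs justification is that Theorem \ref{t2} is applicable when $S=T$. It is, because its hypotheses are just $T,S\in\mathcal{CR}(\mathcal{H})$ and that $f,g$ are conjugate functions. The underlying positivity of the $2\times 2$ block matrix from the proof of Theorem \ref{t1}, with off-diagonal entries $(2T)^{*}$ and $2T$, holds for any pair in $\mathcal{CR}(\mathcal{H})$. I do not expect any step to be an obstacle; the only care needed is the homogeneity factor $\omega(2T)=2\omega(T)$, which squares to $4$ and converts the constants $\frac12$ into $\frac18$.
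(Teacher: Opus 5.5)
Your proposal is correct and follows the paper's route: the corollary comes from setting $S=T$ in Theorem \ref{t2}, and the constant $\frac18$ arises from $\omega^{2}(2T)=4\omega^{2}(T)$, exactly as you track. You also check that Theorem \ref{t2} applies when $S=T$, since the block matrix with off-diagonal entries $2T$ and $(2T)^{*}$ stays positive; the paper leaves this implicit.
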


\begin{corollary}
Let $ T \in \mathcal{B}(\mathcal{H})$ be invertible, and let $ f $ and $ g $ be conjugate functions. Then
\begin{small}
\begin{align*}
\omega^{2} (T) &\leq \frac{1}{8} \left\Vert f^{2}(\vert T \vert)+ g^{2}(\vert T \vert ) \right\Vert \left\Vert g^{2}(\vert T^{*} \vert ) +f^{2}(\vert T^{*}\vert) \right\Vert\\
&+\frac{1}{8} \omega \left( \left( f^{2}(\vert T \vert)+ g^{2}(\vert T \vert ) \right) \left( g^{2}(\vert T^{*} \vert ) +f^{2}(\vert T^{*}\vert) \right) \right).
\end{align*}
\end{small}
\end{corollary}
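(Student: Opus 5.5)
The plan is to specialize the previous corollary (the case $T=S$ of Theorem \ref{t2}) to an invertible operator $T$. This only needs the identities $T^{\dagger}T=TT^{\dagger}=I$.

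\textbf{Step 1: the hypotheses of the previous corollary hold.} An invertible $T\in\mathcal{B}(\mathcal{H})$ has closed range $\mathcal{H}$, so $T\in\mathcal{CR}(\mathcal{H})$. We check that $G=T^{-1}$ satisfies the four Penrose equations:
\[
TGT=T,\qquad GTG=G,\qquad (TG)^*=I=TG,\qquad (GT)^*=I=GT.
\]
By uniqueness of the Moore--Penrose inverse, $T^{\dagger}=T^{-1}$. Hence $T^{\dagger}T=TT^{\dagger}=I$.

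\textbf{Step 2: substitute.} In the previous corollary the sandwiched terms simplify to
\[
T^{\dagger}T\,g^{2}(|T|)\,T^{\dagger}T=g^{2}(|T|),\qquad TT^{\dagger}\,g^{2}(|T^{*}|)\,TT^{\dagger}=g^{2}(|T^{*}|).
\]
Inserting these into both the norm product and the numerical radius term gives exactly the stated inequality.

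Alternatively, one can argue directly from Theorem \ref{t2} with $S=T$. Since $\omega^2(2T)=4\omega^2(T)$, the factor $\tfrac12$ in Theorem \ref{t2} becomes $\tfrac18$ after dividing by $4$.

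There is no genuine obstacle. The only point needing care is justifying $T^{\dagger}=T^{-1}$, which follows from uniqueness of the Moore--Penrose inverse.
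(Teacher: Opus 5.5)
Your proposal is correct and matches the paper's implicit derivation. The paper takes $S=T$ in Theorem~\ref{t2}, which yields the factor $\tfrac18$ since $\omega^{2}(2T)=4\omega^{2}(T)$. It then uses $T^{\dagger}=T^{-1}$, so $T^{\dagger}T=TT^{\dagger}=I$ and the sandwiched terms collapse to $g^{2}(|T|)$ and $g^{2}(|T^{*}|)$. Your check of the four Penrose equations supplies the one justification the paper leaves unstated.
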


 \begin{theorem}
Let $ T \in \mathcal{CR}(\mathcal{H})$ and $ f $ and $ g $ be conjugate functions. If $ h $ is an increasing convex function on $ [0, \infty), $ then

 \begin{equation*}
h(\omega(T)) \leq \frac{1}{2} \Vert h(f^{2}(\vert T \vert)) + h (TT^{\dagger}g^{2}( \vert T^{*} \vert ) TT^{\dagger} ) \Vert.
\end{equation*}
In particular, if $ h(t)=t^{r}$ with $r\geq 1, $ then
\begin{equation*}
\omega^{r}(T) \leq \frac{1}{2} \Vert f^{2r}(\vert T \vert) + (TT^{\dagger}g^{2}( \vert T^{*} \vert ) TT^{\dagger} )^{r} \Vert.
\end{equation*}
\end{theorem}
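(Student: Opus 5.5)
Fix a unit vector $x\in\mathcal{H}$ and bound $h(|\langle Tx,x\rangle|)$; taking the supremum over $x$ at the end, using that $h$ is increasing and continuous, gives $h(\omega(T))$ on the left.

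\emph{Step 1: an inner product inequality.} Write $\langle Tx,x\rangle=\langle TT^{\dagger}Tx,x\rangle=\langle Tx,TT^{\dagger}x\rangle$, using that $TT^{\dagger}$ is self-adjoint. Apply Lemma \ref{l-main} with $y=TT^{\dagger}x$ to get
$$|\langle Tx,x\rangle|\le \langle f^{2}(|T|)x,x\rangle^{1/2}\,\langle TT^{\dagger}g^{2}(|T^{*}|)TT^{\dagger}x,x\rangle^{1/2}.$$
This is exactly the bound that appears in the first chain of the proof of Theorem \ref{t1}, specialized to $S=T$ and $y=x$.

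\emph{Step 2: pass to a convex combination.} Set $A=f^{2}(|T|)$ and $B=TT^{\dagger}g^{2}(|T^{*}|)TT^{\dagger}$; both are positive. By the arithmetic–geometric mean inequality,
$$|\langle Tx,x\rangle|\le \tfrac12\big(\langle Ax,x\rangle+\langle Bx,x\rangle\big).$$
Since $h$ is increasing and convex, it follows that
$$h(|\langle Tx,x\rangle|)\le h\Big(\tfrac{\langle Ax,x\rangle+\langle Bx,x\rangle}{2}\Big)\le \tfrac12\big(h(\langle Ax,x\rangle)+h(\langle Bx,x\rangle)\big).$$

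\emph{Step 3: move $h$ inside the inner products.} Apply Lemma \ref{l-main-2} to each self-adjoint operator $A$ and $B$. Their spectra lie in $[0,\|A\|]$ and $[0,\|B\|]$, on which $h$ is convex, so $h(\langle Ax,x\rangle)\le\langle h(A)x,x\rangle$ and likewise for $B$. Hence
$$h(|\langle Tx,x\rangle|)\le \tfrac12\langle (h(A)+h(B))x,x\rangle\le\tfrac12\|h(A)+h(B)\|,$$
and taking the supremum over $x$ gives the claim. For the particular case, take $h(t)=t^{r}$ with $r\ge1$, which is increasing and convex on $[0,\infty)$; then $h(A)=f^{2r}(|T|)$ by the functional calculus.

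\emph{Main obstacle.} The subtle point is Lemma \ref{l-main-2}, which is stated with $m<M$. If $A$ or $B$ is a scalar multiple of the identity, I will enlarge the interval to $[0,\|A\|+1]$, which does not affect the argument. I also need $h$ convex on an interval containing the spectra; this holds because both spectra lie in $[0,\infty)$.
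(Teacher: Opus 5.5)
Your proof is correct and follows the paper's argument step for step. The paper uses the same sequence: the rewriting $\langle Tx,x\rangle=\langle Tx,TT^{\dagger}x\rangle$, then Lemma \ref{l-main} with $y=TT^{\dagger}x$, then the arithmetic--geometric mean inequality, monotonicity and convexity of $h$, and finally Lemma \ref{l-main-2} applied to $f^{2}(|T|)$ and $TT^{\dagger}g^{2}(|T^{*}|)TT^{\dagger}$. Your extra care about the degenerate interval in Lemma \ref{l-main-2}, and about passing the supremum through $h$, only makes explicit what the paper leaves implicit; continuity of $h$ is automatic, since an increasing convex function on $[0,\infty)$ cannot jump at $0$.
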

\begin{proof}
Using Lemma \ref{l-main}, monotonicity of $ h, $ the arithmetic-geometric mean inequality, convexity of $ h $ and Lemma \ref{l-main-2}, respectively, we have
\begin{align*}
h(\vert \langle Tx, x \rangle \vert) &= h(\vert \langle Tx, TT^{\dagger}x \rangle \vert)\\
& \leq h( \langle f^{2}(\vert T \vert ) x, x \rangle^{1/2} \langle TT^{\dagger}g^{2}(\vert T^{*} \vert ) TT^{\dagger}x, x \rangle^{1/2} )\\
& \leq h \left( \dfrac{\langle f^{2}(\vert T \vert ) x, x \rangle + \langle TT^{\dagger}g^{2}(\vert T^{*} \vert ) TT^{\dagger}x, x \rangle}{2} \right)\\
& \leq \dfrac{ h\left( \langle f^{2}(\vert T \vert ) x, x \rangle \right) + h\left(\langle TT^{\dagger}g^{2}(\vert T^{*} \vert ) TT^{\dagger}x, x \rangle \right) }{2} \\
& \leq \dfrac{ \langle h\left(f^{2}(\vert T \vert )\right) x, x \rangle + \langle h\left(TT^{\dagger}g^{2}(\vert T^{*} \vert ) TT^{\dagger}\right) x, x \rangle }{2} \\
&= \dfrac{1}{2} \langle \left[ h\left(f^{2}(\vert T \vert )\right)+ h\left(TT^{\dagger}g^{2}(\vert T^{*} \vert ) TT^{\dagger}\right) \right] x, x \rangle.
\end{align*}
Therefore, taking the supremum over $ \Vert x \Vert=1$ implies the desired result.
\end{proof}

\section{Some generalized inequalities for the Hadamard product}\label{3}
For the separable Hilbert space $ (\mathcal{H}, \langle \cdot , \cdot \rangle) $, another important product is the Hadamard product, denoted by $ T\circ S $, and
defined by $$ \langle (T \circ S )e_{i}, e_{j}\rangle= \langle Te_{i}, e_{j} \rangle \langle S e_{i}, e_{j} \rangle, $$
where $ \lbrace e_{j} \rbrace$ is an orthonormal basis of $ \mathcal{H}.$
We say $ T \in \mathcal{B}(\mathcal{H})$ is contraction if $ \Vert T \Vert \leq 1. $\\
\begin{lemma}\cite[Proposition 1.3.2]{bhat}\label{l-01}
Let $ T, S \in \mathcal{B}(\mathcal{H}) $ be positive operators. Then the operator matrix
$ \begin{bmatrix}
T & X\\
X^{*}& S
\end{bmatrix} $ is positive if and only if $ X=T^{1/2}C S^{1/2} $ for some contraction $ C. $
\end{lemma}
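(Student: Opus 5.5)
The plan is to prove the two directions separately, using the functional calculus for positive operators and the standard ``completion'' of a positive $2\times 2$ operator matrix. Throughout, write $P=\begin{bmatrix} T & X\\ X^{*} & S\end{bmatrix}$.

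\emph{Sufficiency.} Suppose $X=T^{1/2}CS^{1/2}$ with $\Vert C\Vert\le 1$. Factor
\[
P=\begin{bmatrix} T^{1/2} & 0\\ 0 & S^{1/2}\end{bmatrix}\begin{bmatrix} I & C\\ C^{*} & I\end{bmatrix}\begin{bmatrix} T^{1/2} & 0\\ 0 & S^{1/2}\end{bmatrix}.
\]
It then suffices to show that the middle matrix is positive. For vectors $x,y$ we have
\[
\left\langle \begin{bmatrix} I & C\\ C^{*} & I\end{bmatrix}\begin{bmatrix} x\\ y\end{bmatrix},\begin{bmatrix} x\\ y\end{bmatrix}\right\rangle=\Vert x\Vert^{2}+2\Re\langle Cy,x\rangle+\Vert y\Vert^{2}\ge \Vert x\Vert^{2}-2\Vert x\Vert\Vert y\Vert+\Vert y\Vert^{2}\ge 0 .
\]
A congruence $D^{*}(\cdot)D$ with $D$ the diagonal block preserves positivity, so $P\ge O$.

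\emph{Necessity.} Suppose $P\ge O$. By Lemma \ref{l6}, with its $B$ taken as $X^{*}$, we get
\[
|\langle X^{*}x,y\rangle|^{2}\le\langle Tx,x\rangle\langle Sy,y\rangle=\Vert T^{1/2}x\Vert^{2}\Vert S^{1/2}y\Vert^{2}.
\]
Equivalently, $|\langle Xy,x\rangle|\le \Vert T^{1/2}x\Vert\,\Vert S^{1/2}y\Vert$. Define a sesquilinear form on $\operatorname{ran}S^{1/2}\times\operatorname{ran}T^{1/2}$ by $\phi(S^{1/2}y,T^{1/2}x)=\langle Xy,x\rangle$. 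The inequality makes $\phi$ well defined: if $S^{1/2}y=0$ or $T^{1/2}x=0$, the value vanishes. It also shows $\phi$ is bounded by $1$. Extend $\phi$ by continuity to $\overline{\operatorname{ran}S^{1/2}}\times\overline{\operatorname{ran}T^{1/2}}$, and set it to zero on the orthogonal complements. The Riesz representation for bounded sesquilinear forms then yields an operator $C$ with $\Vert C\Vert\le1$ and
\[
\langle CS^{1/2}y,T^{1/2}x\rangle=\langle Xy,x\rangle \quad\text{for all } x,y,
\]
that is, $\langle T^{1/2}CS^{1/2}y,x\rangle=\langle Xy,x\rangle$. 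Hence $X=T^{1/2}CS^{1/2}$.

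The main obstacle is this last step. The difficulty is that $T$ and $S$ need not be invertible, and their ranges need not be closed. The case of invertible $T,S$ would be immediate with $C=T^{-1/2}XS^{-1/2}$, whose contractivity follows from the displayed inequality. In general one must check carefully that the form is well defined on ranges and extends with norm at most one, which is handled by the kernel argument above. Alternatively, one could approximate with $T+\varepsilon I$ and $S+\varepsilon I$, obtaining contractions $C_\varepsilon$, and pass to a weak-operator limit point $C$ using compactness of the unit ball; the sesquilinear-form route avoids that limiting argument.
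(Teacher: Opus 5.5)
Your proof is correct. The paper gives no argument of its own for this lemma; it only cites Bhatia's Proposition 1.3.2. That source works with matrices and proceeds in two steps. First, for invertible $T,S$, it uses the congruence with $\operatorname{diag}(T^{-1/2},S^{-1/2})$, which reduces positivity to that of $\begin{bmatrix} I & C\\ C^{*} & I\end{bmatrix}$ with $C=T^{-1/2}XS^{-1/2}$. Second, it passes to the general case by a continuity argument with $T+\varepsilon I$ and $S+\varepsilon I$.

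Your sufficiency direction is essentially that congruence step run in reverse. Your necessity direction takes a genuinely different route. Instead of approximating by invertible operators and extracting a limit, you build $C$ directly from the bounded sesquilinear form $(S^{1/2}y,T^{1/2}x)\mapsto\langle Xy,x\rangle$, which is a Douglas-type factorization. The well-definedness, the bound by $1$, the extension to the closures of the ranges, and the Riesz representation all check out. The phrase ``set it to zero on the orthogonal complements'' is best read as $\tilde\phi(u,v)=\phi(P_1u,P_2v)$, where $P_1,P_2$ are the orthogonal projections onto $\overline{\operatorname{ran}S^{1/2}}$ and $\overline{\operatorname{ran}T^{1/2}}$.

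What your route buys you:
\begin{itemize}
\item It works verbatim in $\mathcal{B}(\mathcal{H})$, which is the setting in which the paper actually uses the lemma, whereas the cited source is finite-dimensional.
\item It needs no compactness.
\item It yields a canonical contraction $C$, one vanishing on $(\operatorname{ran}S^{1/2})^{\perp}$ with range in $\overline{\operatorname{ran}T^{1/2}}$.
\end{itemize}
The continuity route is shorter for matrices. In infinite dimensions, however, it needs weak-operator compactness of the unit ball. It also needs the norm estimate $\Vert (T+\varepsilon I)^{1/2}-T^{1/2}\Vert\le\sqrt{\varepsilon}$ to identify $\langle Xy,x\rangle=\langle CS^{1/2}y,T^{1/2}x\rangle$ for a limit point $C$. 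That is exactly the alternative you sketch at the end.
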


 \begin{lemma}\cite[Theorem 1.3.3]{bhat}\label{l-02}
Let $ T, S \in \mathcal{B}(\mathcal{H}) $ be strictly positive. Then the operator matrix
$ \begin{bmatrix}
T & X\\
X^{*}& S
\end{bmatrix} $ is positive if and only if $ T \geq X S^{-1} X^{*}.$
\end{lemma}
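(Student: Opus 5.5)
The statement to prove is Lemma \ref{l-02}: for strictly positive $T,S$, the operator matrix $\begin{bmatrix} T & X\\ X^{*}& S\end{bmatrix}$ is positive if and only if $T \geq X S^{-1} X^{*}$. I would prove it by a congruence (Schur complement) argument.

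Since $S$ is strictly positive, $S^{-1}\in\mathcal{B}(\mathcal{H})$ and is positive. Define the invertible operator matrix
\[
L=\begin{bmatrix} I & -XS^{-1}\\ O & I\end{bmatrix}, \qquad L^{-1}=\begin{bmatrix} I & XS^{-1}\\ O & I\end{bmatrix}.
\]
A direct multiplication gives
\[
L\begin{bmatrix} T & X\\ X^{*}& S\end{bmatrix}L^{*}=\begin{bmatrix} T-XS^{-1}X^{*} & O\\ O & S\end{bmatrix}.
\]
In this computation one uses $(S^{-1})^{*}=S^{-1}$, so that $L^{*}=\begin{bmatrix} I & O\\ -S^{-1}X^{*} & I\end{bmatrix}$. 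The $(1,2)$ entry is $X-XS^{-1}S=O$, and the $(1,1)$ entry is $T-XS^{-1}X^{*}-XS^{-1}X^{*}+XS^{-1}SS^{-1}X^{*}=T-XS^{-1}X^{*}$.

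Congruence by an invertible operator preserves positivity in both directions. Indeed, $\langle LAL^{*}u,u\rangle=\langle A L^{*}u, L^{*}u\rangle$, and $L^{*}$ is a bijection of $\mathcal{H}\oplus\mathcal{H}$. Hence the block matrix is positive if and only if the block-diagonal matrix is positive. Since $S\geq O$, the block-diagonal matrix is positive exactly when $T-XS^{-1}X^{*}\geq O$, which is the claim.

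An alternative route is via Lemma \ref{l-01}: positivity is equivalent to $X=T^{1/2}CS^{1/2}$ with $\|C\|\le1$. Then $XS^{-1}X^{*}=T^{1/2}CC^{*}T^{1/2}\le T$. For the converse, one sets $C=T^{-1/2}XS^{-1/2}$; here $T^{-1/2}$ exists because $T$ is strictly positive. Then $CC^{*}=T^{-1/2}XS^{-1}X^{*}T^{-1/2}\le I$, so $C$ is a contraction.

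There is no real obstacle here. The only care needed is to check that the congruence is legitimate in infinite dimensions. This requires the boundedness of $S^{-1}$ (from strict positivity) and the invertibility of $L$, both of which are immediate.
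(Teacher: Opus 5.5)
The paper gives no proof of this lemma; it is quoted directly from \cite[Theorem 1.3.3]{bhat}. Your proof is correct and is the standard argument. The congruence by $L$ reduces the block matrix to $\mathrm{diag}(T-XS^{-1}X^{*},S)$, which is the usual Schur-complement proof. Your alternative through Lemma~\ref{l-01}, with $C=T^{-1/2}XS^{-1/2}$, is the other standard proof of the same fact.

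The two routes use different hypotheses. The congruence argument uses only the invertibility of $S$, so it shows that the assumption on $T$ is not actually needed. The contraction argument, by contrast, genuinely needs $T^{-1/2}$. Both rely on reading ``strictly positive'' as ``positive and invertible,'' so that $S^{-1}$ and $T^{-1/2}$ are bounded. You correctly flagged this.
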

\begin{lemma}\cite[Theorem 4.1.3]{bhat}\label{l-03}
Let $ T, S \in \mathcal{B}(\mathcal{H}) $ be two positive operators. Then
$$ T \sharp S= \max \lbrace X: \, X=X^{*}, \begin{bmatrix}
T & X\\
X^{*}& S
\end{bmatrix} \geq O \rbrace.$$
\end{lemma}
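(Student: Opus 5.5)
The statement just above is Lemma \ref{l-03}: the geometric mean $T\sharp S$ is the largest self-adjoint $X$ for which $\begin{bmatrix} T & X\\ X^{*} & S\end{bmatrix}\geq O$. The proof has two parts. First, $X=T\sharp S$ itself makes the block matrix positive. Second, every admissible self-adjoint $X$ satisfies $X\leq T\sharp S$. I will first take $T,S$ strictly positive and then remove this assumption by approximation.

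\emph{Feasibility.} Assume $T$ is strictly positive and write $G=T\sharp S=T^{1/2}(T^{-1/2}ST^{-1/2})^{1/2}T^{1/2}$. A direct computation gives $GT^{-1}G=S$. Hence
\[
\begin{bmatrix} T & G\\ G & S\end{bmatrix}
=\begin{bmatrix} T^{1/2} & 0\\ GT^{-1/2} & 0\end{bmatrix}^{*}\ldots
\]
More cleanly, it equals $Z^{*}Z$ with $Z=\begin{bmatrix} T^{1/2} & T^{-1/2}G\end{bmatrix}$. To see this, note $T^{-1/2}G=(T^{-1/2}ST^{-1/2})^{1/2}T^{1/2}$, so $(T^{-1/2}G)^{*}(T^{-1/2}G)=GT^{-1}G=S$. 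Therefore the matrix is positive. Alternatively, one can appeal to Lemma \ref{l-02} applied with the roles of the diagonal blocks swapped, using $S\geq GT^{-1}G$.

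\emph{Maximality.} Let $X=X^{*}$ with the block matrix positive, and still assume $T,S$ strictly positive. By Lemma \ref{l-02}, in its symmetric form, we get $S\geq XT^{-1}X$. Conjugate by $T^{-1/2}$ and put $Y=T^{-1/2}XT^{-1/2}$ and $A=T^{-1/2}ST^{-1/2}$. This gives $Y^{2}\leq A$. The square root is operator monotone, so $|Y|=(Y^{2})^{1/2}\leq A^{1/2}$. Since $Y\leq |Y|$, it follows that $Y\leq A^{1/2}$. Conjugating back by $T^{1/2}$ yields $X\leq T\sharp S$. This step, passing from $Y^{2}\leq A$ to $Y\leq A^{1/2}$ for a non-positive self-adjoint $Y$, is the main point. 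It is handled by operator monotonicity of $t^{1/2}$ (Löwner--Heinz) together with $Y\leq|Y|$.

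\emph{General positive $T,S$.} Replace $T,S$ by $T+\varepsilon I$ and $S+\varepsilon I$. If the original block matrix is positive for $X$, it stays positive after this replacement, so $X\leq (T+\varepsilon I)\sharp(S+\varepsilon I)$. Here $T\sharp S$ is defined as the strong limit of $(T+\varepsilon I)\sharp(S+\varepsilon I)$ as $\varepsilon\downarrow 0$. This family is monotone decreasing in $\varepsilon$ by monotonicity of the mean, so the limit exists. Letting $\varepsilon\to0$ gives $X\leq T\sharp S$. Feasibility also passes to the strong limit, because positivity of the block matrices is preserved under strong limits. Consequently $T\sharp S$ is admissible and dominates every admissible $X$, so it is the maximum.
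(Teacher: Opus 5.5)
Your proof is correct and follows the standard route. The paper itself gives no argument for this lemma; it is quoted from Bhatia (Theorem 4.1.3), where it is proved for positive definite matrices by essentially your invertible-case reasoning:
\begin{itemize}
\item admissibility of $T\sharp S$ comes from the identity $(T\sharp S)T^{-1}(T\sharp S)=S$;
\item maximality comes from the Schur complement bound $S\geq XT^{-1}X$, congruence by $T^{-1/2}$, operator monotonicity of $t^{1/2}$, and $Y\leq |Y|$.
\end{itemize}

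What your write-up adds is the regularization by $T+\varepsilon I$ and $S+\varepsilon I$, with $T\sharp S$ defined as the strong limit. This is genuinely needed for the lemma as the paper states it. There $T,S$ are arbitrary positive operators on a possibly infinite-dimensional $\mathcal{H}$, but the formula $A^{1/2}(A^{-1/2}BA^{-1/2})^{1/2}A^{1/2}$ from the paper's remark on operator means only makes sense for invertible $A$. Moreover, the paper later applies the lemma to diagonal blocks such as $f^{2}(|S|)+T^{\dagger}Tg^{2}(|T|)T^{\dagger}T$, which need not be invertible.

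Two small points:
\begin{itemize}
\item You invoke monotonicity of $(T+\varepsilon I)\sharp(S+\varepsilon I)$ (decreasing as $\varepsilon\downarrow 0$). This follows from the invertible case you have already proved: if $T\leq T'$ and $S\leq S'$ are invertible, then $T\sharp S$ is admissible for the pair $(T',S')$, so $T\sharp S\leq T'\sharp S'$. Noting this makes your argument fully self-contained.
\item The unfinished first display in the feasibility step should be deleted in favour of your factorization $Z^{*}Z$, which is correct as written.
\end{itemize}
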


 \color{black}
\begin{proposition}
Let $ T, S \in \mathcal{CR}(\mathcal{H}),$ and let $ f $ and $ g $ be conjugate functions. Then

 $$(S+T)= \sqrt{SS^{\dagger}g^{2}(\vert S^{*} \vert ) SS^{\dagger} +f^{2}(\vert T^{*}\vert)}C\sqrt{f^{2}(\vert S \vert)+ T^{\dagger}Tg^{2}(\vert T \vert )T^{\dagger}T},$$
for some contraction $ C$. Moreover,
\[
\Bigl( f^{2}(\vert S \vert)+ T^{\dagger}Tg^{2}(\vert T \vert )T^{\dagger}T\Bigr)
\geq
(S+T)^{*}\Bigl(SS^{\dagger}g^{2}(\vert S^{*} \vert ) SS^{\dagger} +f^{2}(\vert T^{*}\vert) \Bigr)^{-1}(S+T).
\]
In addition, if $ T, S $ are self-adjoint, then
\[
( S + T )\leq
\Bigl( f^{2}(\vert S \vert)+ T^{\dagger}Tg^{2}(\vert T \vert )T^{\dagger}T\Bigr)
\sharp
\Bigl(SS^{\dagger}g^{2}(\vert S^{*} \vert ) SS^{\dagger} +f^{2}(\vert T^{*}\vert) \Bigr).
\]
\end{proposition}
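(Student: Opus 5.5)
The whole statement rests on the positive block operator matrix built in the proof of Theorem \ref{t1}. Write
\[
A:=f^{2}(\vert S \vert)+ T^{\dagger}Tg^{2}(\vert T \vert )T^{\dagger}T,\qquad
B:=SS^{\dagger}g^{2}(\vert S^{*} \vert ) SS^{\dagger} +f^{2}(\vert T^{*}\vert).
\]
Both are positive, being sums of positive operators and compressions $P X P$ of positive $X$ by orthogonal projections $P$. The proof of Theorem \ref{t1} shows that
$\begin{bmatrix} A & (S+T)^{*}\\ S+T & B\end{bmatrix}\geq O$.
Conjugating by the swap unitary $\begin{bmatrix} 0 & I\\ I & 0\end{bmatrix}$ puts this in the form of Lemmas \ref{l-01}--\ref{l-03}:
\[
\begin{bmatrix} B & S+T\\ (S+T)^{*} & A\end{bmatrix}\geq O .
\]
All three assertions will be read off from this single matrix.

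\textbf{First assertion.} Apply Lemma \ref{l-01} to the swapped matrix with $X=S+T$, upper-left block $B$ and lower-right block $A$. This gives $S+T=B^{1/2}CA^{1/2}$ for some contraction $C$, which is exactly the claimed factorization $\sqrt{B}\,C\sqrt{A}$.

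\textbf{Second assertion.} Here I will use the original ordering, upper-left $A$ and off-diagonal $X=(S+T)^{*}$. Lemma \ref{l-02} then gives $A\geq (S+T)^{*}B^{-1}(S+T)$. This requires $B$, and really $A$ as well, to be strictly positive (invertible). That hypothesis is implicit in the statement, since $B^{-1}$ appears there. I will state it explicitly as an assumption, or else interpret $B^{-1}$ as $B^{\dagger}$ with a limiting argument using $B+\varepsilon I$.

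\textbf{Third assertion.} Now $S,T$ are self-adjoint, so $X:=S+T$ is self-adjoint and the block matrix $\begin{bmatrix} A & X\\ X & B\end{bmatrix}$ is positive. Lemma \ref{l-03} says $A\sharp B$ is the maximum of such self-adjoint $X$, so $S+T\leq A\sharp B$. If $A,B$ are merely positive, the geometric mean is understood as the strong limit of $(A+\varepsilon I)\sharp(B+\varepsilon I)$. Replacing $A,B$ by $A+\varepsilon I, B+\varepsilon I$ keeps the block matrix positive, and passing to the limit preserves the inequality.

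\textbf{Main obstacle.} Only the second assertion needs care: verifying or imposing the invertibility needed for Lemma \ref{l-02}, and keeping the block orientation consistent between the two lemmas. Everything else is a direct citation.
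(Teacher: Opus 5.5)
Your proposal is correct and is essentially the paper's own proof. The paper likewise reads all three assertions off the positive $2\times 2$ block matrix built in the proof of Theorem \ref{t1} (reused in Theorem \ref{t2}), by applying Lemmas \ref{l-01}, \ref{l-02} and \ref{l-03} in turn. Your attention to block orientation and to the invertibility needed for $B^{-1}$ and for $\sharp$ is a sound tightening of hypotheses the paper leaves implicit, not a different route.
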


 \begin{proof}
From the positive $ 2\times 2 $ operator matrix in the proof of Theorem \ref{t2}, and by Lemma \ref{l-01}, Lemma \ref{l-02}, and Lemma \ref{l-03}, respectively, we obtain the desired result.
\end{proof}

\begin{proposition}\label{p-3-1}
Let $ T, S \in \mathcal{CR}(\mathcal{H}),$ and let $ f $ and $ g $ be conjugate functions. Then for every unit vectors
$ x, y \in \mathcal{H},$
\begin{equation*}
\vert \langle (T\circ S) x, y \rangle\vert^{2} \leq \left\langle \left( f^{2}(\vert S \vert) \circ (T^{\dagger}Tg^{2}(\vert T \vert )T^{\dagger}T)\right)x, x \right\rangle
\left\langle \left((SS^{\dagger}g^{2}(\vert S^{*} \vert ) SS^{\dagger}) \circ f^{2}(\vert T^{*}\vert) \right)y, y\right\rangle.
\end{equation*}

 In particular,
$$ \left\Vert T\circ S \right\Vert^{2} \leq \left\Vert f^{2}(\vert S \vert) \circ \left( T^{\dagger}Tg^{2}(\vert T \vert )T^{\dagger}T\right) \right\Vert . \left\Vert \left(SS^{\dagger}g^{2}(\vert S^{*} \vert ) SS^{\dagger}\right) \circ f^{2}(\vert T^{*}\vert) \right \Vert. $$
\end{proposition}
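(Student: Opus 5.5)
The plan is to obtain the inequality from a positive $2\times 2$ operator matrix, exactly as in Theorem \ref{t1}, now built out of Hadamard products. The proof of Theorem \ref{t1} already supplies two positive block operators:
\[
\mathbb{S}=\begin{bmatrix}
f^{2}(\vert S \vert) & S^{*}\\
S & SS^{\dagger}g^{2}(\vert S^{*} \vert ) SS^{\dagger}
\end{bmatrix}\geq O,
\qquad
\mathbb{T}=\begin{bmatrix}
T^{\dagger}Tg^{2}(\vert T \vert )T^{\dagger}T & T^{*}\\
T & f^{2}(\vert T^{*}\vert)
\end{bmatrix}\geq O .
\]
I would show that their blockwise Hadamard product is positive:
\[
\begin{bmatrix}
f^{2}(\vert S \vert)\circ \bigl(T^{\dagger}Tg^{2}(\vert T \vert )T^{\dagger}T\bigr) & S^{*}\circ T^{*}\\
S\circ T & \bigl(SS^{\dagger}g^{2}(\vert S^{*} \vert ) SS^{\dagger}\bigr)\circ f^{2}(\vert T^{*}\vert)
\end{bmatrix}\geq O.
\]

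Two elementary facts about $\circ$ are needed.
\begin{itemize}
\item It is commutative, so $S\circ T=T\circ S$.
\item It commutes with adjoints, $(T\circ S)^{*}=T^{*}\circ S^{*}$. This follows from $\langle (T\circ S)^{*}e_i,e_j\rangle=\overline{\langle Te_j,e_i\rangle\langle Se_j,e_i\rangle}$.
\end{itemize}
With these, the off-diagonal entries are $(T\circ S)^{*}$ and $T\circ S$. The diagonal entries are Hadamard products of positive operators, hence positive. Lemma \ref{l6} then gives
\[
\vert\langle (T\circ S)x,y\rangle\vert^{2}\le \Bigl\langle \bigl(f^{2}(\vert S\vert)\circ T^{\dagger}Tg^{2}(\vert T\vert)T^{\dagger}T\bigr)x,x\Bigr\rangle\,\Bigl\langle\bigl(SS^{\dagger}g^{2}(\vert S^{*}\vert)SS^{\dagger}\circ f^{2}(\vert T^{*}\vert)\bigr)y,y\Bigr\rangle
\]
for all $x,y$. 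For the norm statement, I would bound each factor by the norm of its operator when $\|x\|=\|y\|=1$. Taking the supremum over unit $x,y$ then gives $\|T\circ S\|^2$ on the left.

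The main obstacle is justifying that the blockwise Hadamard product of two positive operators on $\mathcal{H}\oplus\mathcal{H}$ is positive; this is the Schur product theorem in the operator setting.

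First, I would note that the blockwise product coincides with the Hadamard product on $\mathcal{H}\oplus\mathcal{H}$. This is taken with respect to the orthonormal basis $\{e_i\oplus 0\}\cup\{0\oplus e_i\}$. Indeed, every matrix entry of the block product is the product of the corresponding entries of $\mathbb{S}$ and $\mathbb{T}$.

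Second, for an orthonormal basis $\{u_k\}$ of a separable space $\mathcal{K}$, I would use the isometry $V:\mathcal{K}\to\mathcal{K}\otimes\mathcal{K}$, $Vu_k=u_k\otimes u_k$. Then
\[
A\circ B=V^{*}(A\otimes B)V,
\]
which can be checked on basis vectors:
\[
\langle V^{*}(A\otimes B)Vu_k,u_l\rangle=\langle Au_k,u_l\rangle\langle Bu_k,u_l\rangle .
\]
This identity also shows that $A\circ B$ is bounded. Since $A,B\geq O$ implies $A\otimes B\geq O$, the compression $V^{*}(A\otimes B)V$ is positive.

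Applying this with $\mathcal{K}=\mathcal{H}\oplus\mathcal{H}$, $A=\mathbb{S}$ and $B=\mathbb{T}$ yields the positivity of the combined block matrix. The rest of the argument is then routine.
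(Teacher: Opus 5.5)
Your proposal is correct and follows the same route as the paper: Hadamard-multiply the two positive block matrices from the proof of Theorem~\ref{t1}, identify the off-diagonal entries as $T\circ S$ and its adjoint, and apply Lemma~\ref{l6}. The paper only asserts that the block Hadamard product is positive. Your argument that $A\circ B=V^{*}(A\otimes B)V$ is a compression of a positive operator supplies the justification the paper omits, and it also shows that the Hadamard product is bounded.
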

\begin{proof}
From the proof of Theorem \ref{t1}, we conclude that
$$ \begin{bmatrix}
f^{2}(\vert S \vert) \circ T^{\dagger}Tg^{2}(\vert T \vert )T^{\dagger}T & (S \circ T)^{*}\\
S \circ T & SS^{\dagger}g^{2}(\vert S^{*} \vert ) SS^{\dagger} \circ f^{2}(\vert T^{*}\vert)
\end{bmatrix} \geq O. $$
Now, by Lemma \ref{l6}, the result holds.
\end{proof}


 \begin{proposition}
Let $ T, S \in \mathcal{CR}(\mathcal{H}),$ and let $ f $ and $ g $ be conjugate functions. Then
\begin{equation}\label{eq.5}
\omega (T\circ S) \leq \frac{1}{2} \left\Vert \left( f^{2}(\vert S \vert) \circ (T^{\dagger}Tg^{2}(\vert T \vert )T^{\dagger}T)\right)+ \left((SS^{\dagger}g^{2}(\vert S^{*} \vert ) SS^{\dagger}) \circ f^{2}(\vert T^{*}\vert) \right) \right\Vert.
\end{equation}

 \end{proposition}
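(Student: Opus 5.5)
The plan mirrors the proof of Proposition \ref{p-2}, with Proposition \ref{p-3-1} playing the role that Theorem \ref{t1} played there. Abbreviate
\[
P:=f^{2}(\vert S \vert) \circ \bigl(T^{\dagger}Tg^{2}(\vert T \vert )T^{\dagger}T\bigr), \qquad Q:=\bigl(SS^{\dagger}g^{2}(\vert S^{*} \vert ) SS^{\dagger}\bigr) \circ f^{2}(\vert T^{*}\vert).
\]

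\textbf{Step 1: $P$ and $Q$ are positive.} Each of $f^{2}(\vert S\vert)$ and $T^{\dagger}Tg^{2}(\vert T\vert)T^{\dagger}T$ is positive, since $T^{\dagger}T$ is self-adjoint and $g^{2}(\vert T\vert)\geq O$. By the Schur product theorem, the Hadamard product of two positive operators is positive, so $P\geq O$. The same argument gives $Q\geq O$, because $SS^{\dagger}$ is self-adjoint.

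\textbf{Step 2: pass from the mixed bound to a numerical radius bound.} Take $y=x$ with $\Vert x\Vert=1$ in Proposition \ref{p-3-1}. This gives
\[
\vert\langle (T\circ S)x,x\rangle\vert\leq \langle Px,x\rangle^{1/2}\langle Qx,x\rangle^{1/2}.
\]
Both factors are nonnegative by Step 1, so the arithmetic-geometric mean inequality bounds the right-hand side by $\frac12\langle (P+Q)x,x\rangle$. Since $P+Q$ is positive, this is at most $\frac12\Vert P+Q\Vert$. Taking the supremum over unit vectors $x$ yields the stated bound on $\omega(T\circ S)$.

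\textbf{Expected obstacle.} The only real issue is the validity of Proposition \ref{p-3-1}, or equivalently of the $2\times2$ block positivity it rests on. One might try to justify that positivity by ``Hadamard-multiplying'' the two positive block matrices from the proof of Theorem \ref{t1}, but the Hadamard product must be taken with respect to an orthonormal basis of $\mathcal{H}\oplus\mathcal{H}$ adapted to the direct sum. Under that convention the Schur product of two positive block matrices is positive, and its blocks are exactly the entrywise products of the corresponding blocks.

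There is a second subtlety. The off-diagonal block of that product is $S\circ T$, and $S\circ T=T\circ S$ because the Hadamard product is commutative. The pairing of the diagonal blocks, however, is $f^{2}(\vert S\vert)$ with $T^{\dagger}Tg^{2}(\vert T\vert)T^{\dagger}T$, and similarly on the other diagonal. I will take Proposition \ref{p-3-1} as given, as the paper permits. If further justification is needed, I would rearrange the two block matrices (for example, by swapping their rows and columns) so that the diagonal blocks line up with $P$ and $Q$ before taking the Schur product.
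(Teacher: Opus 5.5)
Your proposal is correct and is essentially the paper's proof: set $y=x$ in Proposition \ref{p-3-1}, apply the arithmetic--geometric mean inequality, bound $\frac12\langle (P+Q)x,x\rangle$ by $\frac12\Vert P+Q\Vert$, and take the supremum, with your Step 1 making explicit the positivity of $P$ and $Q$ that the paper uses tacitly. Your worry about the diagonal pairing is unfounded: the blockwise Schur product of the two positive block matrices from the proof of Theorem \ref{t1}, taken in their given order, already has diagonal blocks exactly $P$ and $Q$ and off-diagonal blocks $S^{*}\circ T^{*}=(S\circ T)^{*}$ and $S\circ T$, so no rearrangement is needed (and swapping rows and columns of only one factor would in fact misalign the blocks).
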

\begin{proof}
From Proposition \ref{p-3-1}, for every unit vector $ x \in \mathcal{H},$ we have
\begin{align*}
& \vert \langle (T\circ S)x, x \rangle\vert \leq \sqrt{\left\langle \left( f^{2}(\vert S \vert) \circ (T^{\dagger}Tg^{2}(\vert T \vert )T^{\dagger}T)\right)x, x \right\rangle
\left\langle (SS^{\dagger}g^{2}(\vert S^{*} \vert ) SS^{\dagger}) \circ f^{2}(\vert T^{*}\vert) x, x\right\rangle}\\
& \leq \frac{1}{2} \left( \left\langle \left( f^{2}(\vert S \vert) \circ (T^{\dagger}Tg^{2}(\vert T \vert )T^{\dagger}T)\right)x, x \right\rangle +
\left\langle (SS^{\dagger}g^{2}(\vert S^{*} \vert ) SS^{\dagger}) \circ f^{2}(\vert T^{*}\vert) x, x\right\rangle \right)\\
& \leq \frac{1}{2} \left\Vert \left( f^{2}(\vert S \vert) \circ (T^{\dagger}Tg^{2}(\vert T \vert )T^{\dagger}T)\right) + \left( (SS^{\dagger}g^{2}(\vert S^{*} \vert ) SS^{\dagger}) \circ f^{2}(\vert T^{*}\vert)\right) \right\Vert.
\end{align*}
Therefore, taking the supremum over all unit vectors $x$, we get (\ref{eq.5}).
\end{proof}

 Again from the proof of Theorem \ref{t1}, we have
$$ \begin{bmatrix}
f^{2}(\vert S \vert) \circ T^{\dagger}Tg^{2}(\vert T \vert )T^{\dagger}T & (S \circ T)^{*}\\
S \circ T & SS^{\dagger}g^{2}(\vert S^{*} \vert ) SS^{\dagger} \circ f^{2}(\vert T^{*}\vert)
\end{bmatrix} \geq O. $$
Now, by Lemma \ref{l-01}, Lemma \ref{l-02} and Lemma \ref{l-03}, respectively, we have the following corollary.
\begin{corollary}
Let $ T, S \in \mathcal{CR}(\mathcal{H}),$ and let $ f $ and $ g $ be conjugate functions. Then
$$(S \circ T)= \sqrt{(SS^{\dagger}g^{2}(\vert S^{*} \vert ) SS^{\dagger}) \circ f^{2}(\vert T^{*}\vert)}C\sqrt{f^{2}(\vert S \vert) \circ (T^{\dagger}Tg^{2}(\vert T \vert )T^{\dagger}T)},$$
for some contraction $ C$. Moreover,
\[
\Bigl( f^{2}(\vert S \vert) \circ (T^{\dagger}Tg^{2}(\vert T \vert )T^{\dagger}T) \Bigr)
\geq
(S \circ T)^{*}\Bigl((SS^{\dagger}g^{2}(\vert S^{*} \vert ) SS^{\dagger}) \circ f^{2}(\vert T^{*}\vert) \Bigr)^{-1}(S+T).
\]
In addition, if $ T, S $ are self-adjoint, then
\[
( S \circ T )\leq
\Bigl( f^{2}(\vert S \vert) \circ (T^{\dagger}Tg^{2}(\vert T \vert )T^{\dagger}T) \Bigr)
\sharp
\Bigl( (SS^{\dagger}g^{2}(\vert S^{*} \vert ) SS^{\dagger}) \circ f^{2}(\vert T^{*}\vert) \Bigr).
\]
\end{corollary}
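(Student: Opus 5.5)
The plan is to derive the positivity of the Hadamard-product block matrix that the paper displays just before the statement. Once that positivity is available, the three assertions follow directly from Lemmas \ref{l-01}, \ref{l-02} and \ref{l-03}, exactly as in the proposition for $S+T$.

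Step 1 is to justify the block positivity. From the proof of Theorem \ref{t1} we have two positive block matrices:
\[
\begin{bmatrix} f^{2}(|S|) & S^{*}\\ S & SS^{\dagger}g^{2}(|S^{*}|)SS^{\dagger}\end{bmatrix}\geq O
\quad\text{and}\quad
\begin{bmatrix} T^{\dagger}Tg^{2}(|T|)T^{\dagger}T & T^{*}\\ T & f^{2}(|T^{*}|)\end{bmatrix}\geq O .
\]
I would identify $\mathcal{H}\oplus\mathcal{H}$ with $\ell^2$ over a doubled basis $\{e_i\oplus 0,\,0\oplus e_i\}$. The Hadamard product of two $2\times 2$ operator matrices is then computed entrywise, blockwise. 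By the Schur product theorem (positivity is preserved under Hadamard products, in the separable case as well), the Hadamard product of these two positive blocks is positive. Its off-diagonal entries are $S^{*}\circ T^{*}=(S\circ T)^{*}$ and $S\circ T$. This gives the displayed matrix.

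Step 2 covers the three consequences, writing $A=f^{2}(|S|)\circ(T^{\dagger}Tg^{2}(|T|)T^{\dagger}T)$ and $B=(SS^{\dagger}g^{2}(|S^{*}|)SS^{\dagger})\circ f^{2}(|T^{*}|)$, both positive by Schur.
\begin{itemize}
\item \emph{First claim.} I would apply Lemma \ref{l-01} to $\begin{bmatrix} B & S\circ T\\ (S\circ T)^{*} & A\end{bmatrix}$. This matrix is unitarily equivalent to the one above by swapping coordinates, so it is positive. The lemma then gives $S\circ T=B^{1/2}CA^{1/2}$ with $C$ a contraction.
\item \emph{Second claim.} I would apply Lemma \ref{l-02} to $\begin{bmatrix} A & (S\circ T)^{*}\\ S\circ T & B\end{bmatrix}$. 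This gives $A\geq (S\circ T)^{*}B^{-1}(S\circ T)$. The statement writes $(S+T)$ at the end, which should read $(S\circ T)$ as a typo.
\item \emph{Third claim.} If $S,T$ are self-adjoint, then $X=S\circ T$ is self-adjoint, since $(S\circ T)^{*}=S^{*}\circ T^{*}$. Lemma \ref{l-03} then yields $S\circ T\leq A\sharp B$.
\end{itemize}

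The main obstacle is the hypotheses of Lemma \ref{l-02}, which requires strict positivity. Here $B$ need not be invertible, for example when $SS^{\dagger}\neq I$. I would state the second inequality under the assumption that $A$ and $B$ are invertible, so that $B^{-1}$ makes sense. Alternatively, I would replace $B$ by $B+\varepsilon I$, which keeps the block matrix positive, apply the lemma, and then interpret the claim with that perturbation. A secondary point is checking that $(S^{*}\circ T^{*})=(S\circ T)^{*}$ and that the Schur product theorem holds for operators, not just matrices. Both follow from the matrix-entry definition, together with a compression or limiting argument over finite sections of the basis.
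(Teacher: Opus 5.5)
Your proposal is correct and follows the paper's route. The paper likewise gets positivity of the Hadamard block matrix from the two positive blocks in the proof of Theorem \ref{t1}, implicitly via the Schur product theorem, which you make explicit, and then reads off the three claims from Lemmas \ref{l-01}, \ref{l-02} and \ref{l-03}. Your side remarks are also accurate and go beyond the paper: the $(S+T)$ in the second display should be $(S\circ T)$, the coordinate swap is needed to match the orientation of Lemma \ref{l-01}, and the second inequality tacitly requires the $(2,2)$ entry to be invertible, which the paper never addresses.
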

\section{Additional results}\label{se4}
In this short section we present some more elaborated results that involve norm and numerical radii bounds.
\begin{lemma}\label{l7}
Let $ T \in \mathcal{CR}(\mathcal{H})$ and $ p\geq q >1 $ be conjugate exponents. Then for all
$ x \in \mathcal{H}$ with $ \Vert x \Vert=1, $
$$ \Vert Tx \Vert \Vert T^{*}x\Vert \leq \left\Vert \frac{1}{p} (T^{*}T)^{p}+ \frac{1}{q} (TT^{*})^{q}\right \Vert^{1/2}-\inf_{\Vert x \Vert=1}\mu(x), $$
where $ \mu(x)= \frac{1}{p} \left( \langle T^{*}Tx, x \rangle^{p/4} - \langle TT^{*}x, x \rangle^{q/4} \right)^{2}.$
\end{lemma}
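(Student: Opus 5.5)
The plan is to start from the identities $\Vert Tx\Vert^{2}=\langle T^{*}Tx,x\rangle$ and $\Vert T^{*}x\Vert^{2}=\langle TT^{*}x,x\rangle$. We then combine the refined Young inequality (\ref{y1}), a Jensen-type convexity step, and the McCarthy inequality (Lemma \ref{l1}(i)), in the same spirit as the proof of Lemma \ref{2-1}. The Moore-Penrose inverse will not actually be needed for this inequality.

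Fix a unit vector $x$ and set $a=\Vert Tx\Vert=\langle T^{*}Tx,x\rangle^{1/2}$ and $b=\Vert T^{*}x\Vert=\langle TT^{*}x,x\rangle^{1/2}$. If $a$ or $b$ vanishes the estimate is handled by the limiting form of (\ref{y1}), since the steps below remain valid for $a,b\ge 0$.

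\emph{Step 1 (refined Young).} Applying (\ref{y1}) gives
$$ab+\frac1p\bigl(a^{p/2}-b^{q/2}\bigr)^{2}\le \frac{a^{p}}{p}+\frac{b^{q}}{q}.$$
Since $a^{p/2}=\langle T^{*}Tx,x\rangle^{p/4}$ and $b^{q/2}=\langle TT^{*}x,x\rangle^{q/4}$, the correction term is exactly $\mu(x)$. So
$$\Vert Tx\Vert\Vert T^{*}x\Vert+\mu(x)\le \tfrac1p a^{p}+\tfrac1q b^{q}.$$

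\emph{Step 2 (convexity).} The function $t\mapsto t^{2}$ is convex and $\frac1p+\frac1q=1$. Hence
$$\Bigl(\tfrac1p a^{p}+\tfrac1q b^{q}\Bigr)^{2}\le \tfrac1p a^{2p}+\tfrac1q b^{2q},$$
which is the case $r=2$ of the convexity step used for Lemma \ref{3-1}.

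\emph{Step 3 (McCarthy).} We have $a^{2p}=\langle T^{*}Tx,x\rangle^{p}$ and $b^{2q}=\langle TT^{*}x,x\rangle^{q}$, with both exponents $p\ge q>1$. Lemma \ref{l1}(i) therefore gives
$$a^{2p}\le\langle (T^{*}T)^{p}x,x\rangle, \qquad b^{2q}\le\langle (TT^{*})^{q}x,x\rangle.$$
Combining Steps 1–3 yields
$$\Vert Tx\Vert\Vert T^{*}x\Vert+\mu(x)\le\Bigl\langle\Bigl(\tfrac1p(T^{*}T)^{p}+\tfrac1q(TT^{*})^{q}\Bigr)x,x\Bigr\rangle^{1/2}\le\Bigl\Vert\tfrac1p(T^{*}T)^{p}+\tfrac1q(TT^{*})^{q}\Bigr\Vert^{1/2}.$$

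Finally, $\mu(x)\ge\inf_{\Vert z\Vert=1}\mu(z)$, so moving $\mu(x)$ to the right-hand side gives the stated bound.

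The main point requiring care is the bookkeeping of exponents. One must use (\ref{y1}) with $a,b$ equal to the \emph{norms} $\Vert Tx\Vert,\Vert T^{*}x\Vert$, not their squares, so that the correction term produces exactly the powers $p/4$ and $q/4$ in $\mu$. Only afterwards should one square, so that McCarthy is applied with exponents $p,q\ge1$. Applying McCarthy directly to $a^{q}=\langle TT^{*}x,x\rangle^{q/2}$ would fail, because $q/2\le1$.
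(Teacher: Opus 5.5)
Your proof is correct and follows essentially the paper's argument: you use the refined Young inequality (\ref{y1}) with $a=\Vert Tx\Vert$ and $b=\Vert T^{*}x\Vert$ (so the correction term is exactly $\mu(x)$), McCarthy's inequality with exponents $p,q\geq 1$, and concavity of the square root (which you phrase as convexity of $t^{2}$). The only difference is that you apply the convexity step before McCarthy, whereas the paper does it the other way round. You are also right that the closed-range hypothesis and the Moore--Penrose inverse play no role in this lemma.
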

\begin{proof}
Let $ x \in \mathcal{H}$ be a unit vector. Utilizing (\ref{y1}) and concavity of
$ f(t)=\sqrt{t} $ on $ [0, \infty), $ respectively, we obtain

 \begin{align*}
\Vert Tx \Vert \Vert T^{*}x\Vert &= \langle T^{*}Tx, x \rangle^{1/2}\langle TT^{*}x, x \rangle^{1/2}\\
& \leq \frac{1}{p}\langle T^{*}Tx, x \rangle^{p/2}+ \frac{1}{q}\langle TT^{*}x, x \rangle^{q/2}-
\frac{1}{p} \left( \langle T^{*}Tx, x \rangle^{p/4} - \langle TT^{*}x, x \rangle^{q/4} \right)^{2}\\
& \leq \frac{1}{p}\langle (T^{*}T)^{p}x, x \rangle^{1/2}+ \frac{1}{q}\langle (TT^{*})^{q}x, x \rangle^{1/2}-
\frac{1}{p} \left( \langle T^{*}Tx, x \rangle^{p/4} - \langle TT^{*}x, x \rangle^{q/4} \right)^{2}\\
& \leq \left( \frac{1}{p}\left\langle (T^{*}T)^{p}x, x \right\rangle+ \frac{1}{q}\left\langle (TT^{*})^{q}x, x \right\rangle \right)^{1/2}-
\frac{1}{p} \left( \langle T^{*}Tx, x \rangle^{p/4} - \langle TT^{*}x, x \rangle^{q/4} \right)^{2}\\
& = \left\langle \left( \frac{1}{p}(T^{*}T)^{p}+ \frac{1}{q}(TT^{*})^{q} \right) x, x \right\rangle^{1/2}-
\frac{1}{p} \left( \langle T^{*}Tx, x \rangle^{p/4} - \langle TT^{*}x, x \rangle^{q/4} \right)^{2}\\
& \leq \left\Vert \frac{1}{p}(T^{*}T)^{p}+ \frac{1}{q}(TT^{*})^{q} \right\Vert^{1/2}-
\inf_{\Vert x \Vert=1}\mu(x),\\
\end{align*}
which completes the proof.
\end{proof}

 In line with the proof of Lemma \ref{l7}, we have the following result.
\begin{lemma}\label{l8}
Let $ T \in \mathcal{CR}(\mathcal{H})$ and $ p\geq q >1 $ be conjugate exponents. Then for all
$ x \in \mathcal{H}$ with $ \Vert x \Vert=1, $
$$ \Vert T^{\dagger}Tx \Vert \Vert TT^{\dagger}x\Vert \leq \left\Vert \frac{1}{p} (T^{\dagger}T)^{p}+ \frac{1}{q} (TT^{\dagger})^{q} \right\Vert^{1/2}-\inf_{\Vert x \Vert=1}\mu(x), $$
where $ \mu(x)= \frac{1}{p} \left( \langle T^{\dagger}Tx, x \rangle^{p/4} - \langle TT^{\dagger}x, x \rangle^{q/4} \right)^{2}.$

 \end{lemma}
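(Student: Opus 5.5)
We follow the chain of estimates in the proof of Lemma \ref{l7}, with $T$ replaced by the two orthogonal projections $P:=T^{\dagger}T$ and $Q:=TT^{\dagger}$. Since $T\in\mathcal{CR}(\mathcal{H})$, both are bounded. By the Penrose equations both are self-adjoint, and they are idempotent, since $T^{\dagger}TT^{\dagger}T=T^{\dagger}T$ and $TT^{\dagger}TT^{\dagger}=TT^{\dagger}$. Hence $P=P^{*}P$ and $Q=Q^{*}Q$, and $P,Q\geq O$.

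Fix a unit vector $x$. The first step is to rewrite the left side as an inner-product expression, as in the first line of Lemma \ref{l7}:
\[
\Vert T^{\dagger}Tx\Vert\,\Vert TT^{\dagger}x\Vert=\langle P^{*}Px,x\rangle^{1/2}\langle Q^{*}Qx,x\rangle^{1/2}=\langle T^{\dagger}Tx,x\rangle^{1/2}\langle TT^{\dagger}x,x\rangle^{1/2}.
\]
Next we apply the refined Young inequality (\ref{y1}) with $a=\langle T^{\dagger}Tx,x\rangle^{1/2}$ and $b=\langle TT^{\dagger}x,x\rangle^{1/2}$. This bounds the product by
\[
\frac1p\langle T^{\dagger}Tx,x\rangle^{p/2}+\frac1q\langle TT^{\dagger}x,x\rangle^{q/2}-\mu(x).
\]

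Then we pass to powers of the operators using Lemma \ref{l1}(ii) with exponent $1/2$ applied to $(T^{\dagger}T)^{p}$ and to $(TT^{\dagger})^{q}$, together with Lemma \ref{l1}(i) applied to $P$ and $Q$ with exponents $p$ and $q$. This gives
\[
\langle Px,x\rangle^{p/2}=\bigl(\langle Px,x\rangle^{p}\bigr)^{1/2}\le\langle P^{p}x,x\rangle^{1/2},
\]
and similarly for $Q$. This is exactly the step used in Lemma \ref{l7}. We then use concavity of $\sqrt{t}$ with weights $1/p$ and $1/q$:
\[
\frac1p\langle P^{p}x,x\rangle^{1/2}+\frac1q\langle Q^{q}x,x\rangle^{1/2}\le\left\langle\left(\frac1p P^{p}+\frac1q Q^{q}\right)x,x\right\rangle^{1/2}.
\]
Finally, the inner product is at most the norm, and $-\mu(x)\le-\inf_{\Vert x\Vert=1}\mu(x)$, which gives the claim. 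Since $P^{p}=P$ and $Q^{q}=Q$, the right-hand norm could be simplified to $\Vert\frac1pP+\frac1qQ\Vert$, but that is not needed.

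The only point requiring care is the exponent bookkeeping when converting $\langle Px,x\rangle^{p/2}$ into $\langle P^{p}x,x\rangle^{1/2}$. This holds by McCarthy's inequality with $r=p\ge1$, or trivially because $P$ is a projection and $0\le\langle Px,x\rangle\le1$, so $\langle Px,x\rangle^{p}\le\langle Px,x\rangle=\langle P^{p}x,x\rangle$. Beyond that, the argument is a verbatim transcription of the proof of Lemma \ref{l7}, and no genuine obstacle is expected.
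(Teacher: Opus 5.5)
Your proposal is correct and is essentially the paper's argument. The paper only says the result follows ``in line with'' the proof of Lemma \ref{l7}, and your write-up is that transcription, with the key identifications $\Vert T^{\dagger}Tx\Vert^{2}=\langle T^{\dagger}Tx,x\rangle$ and $\Vert TT^{\dagger}x\Vert^{2}=\langle TT^{\dagger}x,x\rangle$ coming from both operators being orthogonal projections. The appeal to Lemma \ref{l1}(ii) is unnecessary: the step $\langle Px,x\rangle^{p/2}\le\langle P^{p}x,x\rangle^{1/2}$ needs only Lemma \ref{l1}(i) plus monotonicity of $\sqrt{t}$, or your direct projection argument.
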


\begin{theorem}
Let $ T \in \mathcal{CR}(\mathcal{H})$ and $ p\geq q >1, p^{\prime}\geq q^{\prime} > 1$ be, respectively, conjugate exponents. Then
\begin{small}
\begin{align*}
\omega^{2}(T) \leq \left(\left \Vert \frac{1}{p}(T^{*}T)^{p}+ \frac{1}{q}(TT^{*})^{q} \right\Vert^{1/2}-
\inf_{\Vert x \Vert=1}\mu_{p}(x) \right) \left( \left\Vert \frac{1}{p^{\prime}} (T^{\dagger}T)^{p^{\prime}}+ \frac{1}{q^{\prime}} (TT^{\dagger})^{q^{\prime}} \right\Vert^{1/2}-\inf_{\Vert x \Vert=1}\zeta_{p^{\prime}}(x) \right),
\end{align*}
\end{small}
where $ \mu(x)= \frac{1}{p} \left( \langle T^{*}Tx, x \rangle^{p/4} - \langle TT^{*}x, x \rangle^{q/4} \right)^{2}$ and $ \zeta_{p^{\prime}}(x)= \frac{1}{p^{\prime}} \left( \langle T^{\dagger}Tx, x \rangle^{p^{\prime}/4} - \langle TT^{\dagger}x, x \rangle^{q^{\prime}/4} \right)^{2}.$
\end{theorem}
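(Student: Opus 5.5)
We would prove a pointwise bound $|\langle Tx,x\rangle|^{2}\le \|Tx\|\,\|T^{*}x\|\cdot\|T^{\dagger}Tx\|\,\|TT^{\dagger}x\|$ for every unit vector $x$. Then we apply Lemma \ref{l7} (with exponents $p,q$) to the first pair of norms and Lemma \ref{l8} (with exponents $p^{\prime},q^{\prime}$) to the second pair. Finally we take the supremum over unit $x$. Since both right-hand sides in those lemmas are already independent of $x$, the product of the two bounds gives exactly the displayed estimate for $\omega^{2}(T)$.

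For the pointwise bound, the plan is to use the Moore--Penrose identities in two ways. First, writing $T=TT^{\dagger}T$ and using self-adjointness of $T^{\dagger}T$, we have $\langle Tx,x\rangle=\langle T(T^{\dagger}Tx),x\rangle=\langle T^{\dagger}Tx,T^{*}x\rangle$. The Cauchy--Schwarz inequality then gives $|\langle Tx,x\rangle|\le \|T^{\dagger}Tx\|\,\|T^{*}x\|$. Second, using self-adjointness of $TT^{\dagger}$, we have $\langle Tx,x\rangle=\langle TT^{\dagger}Tx,x\rangle=\langle Tx,TT^{\dagger}x\rangle$, so $|\langle Tx,x\rangle|\le \|Tx\|\,\|TT^{\dagger}x\|$. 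Multiplying these two inequalities yields
\[
|\langle Tx,x\rangle|^{2}\le \bigl(\|Tx\|\,\|T^{*}x\|\bigr)\bigl(\|T^{\dagger}Tx\|\,\|TT^{\dagger}x\|\bigr).
\]

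Each factor on the right is nonnegative. Lemma \ref{l7} bounds the first factor by $\|\frac1p(T^{*}T)^{p}+\frac1q(TT^{*})^{q}\|^{1/2}-\inf\mu_p$. Lemma \ref{l8} bounds the second by the corresponding expression with $T^{\dagger}T$, $TT^{\dagger}$ and $\zeta_{p^{\prime}}$. Because both factors are nonnegative, we may multiply the two upper bounds: the upper bounds dominate nonnegative quantities and hence are themselves nonnegative. Taking the supremum over $\|x\|=1$ on the left completes the argument.

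The main obstacle is only bookkeeping. The crucial point is the choice of splitting, which pairs $T^{*}x$ with $T^{\dagger}Tx$ and $Tx$ with $TT^{\dagger}x$. This pairing is exactly what makes the four norms regroup into the products controlled by Lemmas \ref{l7} and \ref{l8}. Beyond that, we rely on Lemmas \ref{l7} and \ref{l8} exactly as stated.
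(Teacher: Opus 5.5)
Your proposal is correct and matches the paper's proof essentially step for step. The paper uses the same two Cauchy--Schwarz estimates, $|\langle Tx,x\rangle|\le\|Tx\|\,\|TT^{\dagger}x\|$ and $|\langle Tx,x\rangle|\le\|T^{\dagger}Tx\|\,\|T^{*}x\|$, multiplies them, bounds the resulting product by Lemma \ref{l7}, and takes the supremum; your write-up is slightly more careful in explicitly invoking Lemma \ref{l8} for the factor $\|T^{\dagger}Tx\|\,\|TT^{\dagger}x\|$ and in justifying the multiplication of the two bounds by nonnegativity, both of which the paper leaves implicit.
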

\begin{proof}
Let $ x \in \mathcal{H}$ and $ \Vert x \Vert=1. $ Then Cauchy-Schwarz inequality implies
$$ \vert \langle Tx, x\rangle \vert =\vert \langle Tx, TT^{\dagger}x\rangle \vert \leq \Vert Tx \Vert \Vert TT^{\dagger} \Vert,$$
and
$$ \vert \langle Tx, x\rangle \vert =\vert \langle T^{\dagger}Tx, T^{*}x\rangle \vert \leq \Vert T^{*}x \Vert \Vert T^{\dagger}Tx \Vert.$$
Therefore,
$$ \vert \langle Tx, x\rangle \vert ^{2} \leq \Vert Tx \Vert \Vert T^{*}x \Vert \Vert T^{\dagger}Tx \Vert \Vert TT^{\dagger} x \Vert. $$
Now, by Lemma \ref{l7}, taking the supremum over $ \Vert x \Vert=1, $ we obtain the desired result.
\end{proof}
\begin{remark}
In Lemma \ref{l7}, if $ mI \leq TT^{*} \leq MI$ for some scalars $ 0< m <M, $ then it is easy to see that
$$ \Vert Tx \Vert \Vert T^{*}x\Vert \leq \left\Vert \frac{1}{p} (T^{*}T)^{p/2}+ \frac{1}{qK_{q/2}} (TT^{*})^{q/2} \right\Vert-\inf_{\Vert x \Vert=1}\mu_{p}(x). $$
In the same lemma, if $ m \leq TT^{\dagger} \leq M, $ then
$$ \Vert Tx \Vert \Vert T^{\dagger}x\Vert \leq \left\Vert \frac{1}{p} (T^{\dagger}T)^{p/2}+ \frac{1}{qK_{q/2}} (TT^{\dagger})^{q/2} \right\Vert-\inf_{\Vert x \Vert=1}\zeta_{p}(x), $$

 where $K_{q/2}=K(m, M, q/2)$ and
$$ \mu_{p}(x)= \frac{1}{p} \left( \langle T^{*}Tx, x \rangle^{p/4} - \langle TT^{*}x, x \rangle^{q/4} \right)^{2}, \zeta_{p}(x)= \frac{1}{p} \left( \langle T^{\dagger}Tx, x \rangle^{p/4} - \langle TT^{\dagger}x, x \rangle^{q/4} \right)^{2}.$$

 Now, let $ T \in \mathcal{CR}(\mathcal{H})$ and $ p\geq q >1, \ p^{\prime}\geq q^{\prime} > 1$ be, respectively, conjugate exponents. We have
\begin{enumerate}
\item[(i)] If $ mI \leq TT^{*}, TT^{\dagger} \leq MI$ for some scalars $ 0< m <M, $ then
\begin{small}
\begin{align*}
& \omega^{2}(T) \leq \\
& \left( \left\Vert \frac{1}{p} (T^{*}T)^{p/2}+ \frac{1}{qK_{q/2}} (TT^{*})^{q/2} \right\Vert-\inf_{\Vert x \Vert=1}\mu_{p}(x) \right) \left( \left\Vert \frac{1}{p^{\prime}} (T^{\dagger}T)^{p^{\prime}/2}+ \frac{1}{q^{\prime}K_{q^{\prime}/2}} (TT^{\dagger})^{q^{\prime}/2} \right\Vert-\inf_{\Vert x \Vert=1}\zeta_{p^{\prime}}(x) \right).
\end{align*}
\end{small}
\item[(ii)] If $ mI \leq TT^{*} \leq MI, $ for some scalars $ 0< m <M, $ then
\begin{small}
\begin{align*}
& \omega^{2}(T) \leq \\
& \left( \left\Vert \frac{1}{p} (T^{*}T)^{p/2}+ \frac{1}{qK_{q/2}} (TT^{*})^{q/2} \right\Vert-\inf_{\Vert x \Vert=1}\mu_{p}(x) \right) \left( \left\Vert \frac{1}{p^{\prime}} (T^{\dagger}T)^{p^{\prime}}+ \frac{1}{q^{\prime}} (TT^{\dagger})^{q^{\prime}} \right\Vert^{1/2}-\inf_{\Vert x \Vert=1}\zeta_{p^{\prime}}(x) \right).
\end{align*}
\end{small}
\item[(iii)] If $ mI \leq TT^{\dagger} \leq MI$ for some scalars $ 0< m <M, $ then
\begin{small}
\begin{align*}
& \omega^{2}(T) \leq \\
& \left( \left\Vert \frac{1}{p}(T^{*}T)^{p}+ \frac{1}{q}(TT^{*})^{q} \right\Vert^{1/2}-
\inf_{\Vert x \Vert=1}\mu_{p}(x) \right) \left( \left\Vert \frac{1}{p^{\prime}} (T^{\dagger}T)^{p^{\prime}/2}+ \frac{1}{q^{\prime}K_{q^{\prime}/2}} (TT^{\dagger})^{q^{\prime}/2} \right\Vert-\inf_{\Vert x \Vert=1}\zeta_{p^{\prime}}(x) \right),
\end{align*}
\end{small}
where
\[\mu_{p}(x)= \frac{1}{p} \left( \langle T^{*}Tx, x \rangle^{p/4} - \langle TT^{*}x, x \rangle^{q/4} \right)^{2}, \zeta_{p^{\prime}}(x)= \frac{1}{p^{\prime}} \left( \langle T^{\dagger}Tx, x \rangle^{p^{\prime}/4} - \langle TT^{\dagger}x, x \rangle^{q^{\prime}/4} \right)^{2},\]
and $ K_{\alpha}=K(m, M, \alpha)$.
\end{enumerate}

 \end{remark}


\vspace{0.5cm} \noindent

 {\bf Conflict of Interest}

The authors have no conflicts of interest to declare that are relevant to the content of this article.


\end{document}